\pgfplotsset{compat=newest} %
\DeclareMathAlphabet\mathbfcal{OMS}{cmsy}{b}{n}
\newcommand{\bbN}{\mathbb{N}}
\newcommand{\pmat}[1]{\begin{pmatrix} #1 \end{pmatrix}}
\newcommand{\tn}[1]{t^{#1}}
\newcommand{\ts}{\tau}
\newcommand{\tauh}{\tfrac{\tau}{2}}
\newcommand{\tausf}{\tfrac{\tau^2}{4}}
\renewcommand{\dim}{d}
\newcommand{\dtau}{\partial_\tau}
\newcommand{\abs}[1]{\left\vert #1 \right\vert}
\newcommand{\ip}[1]{\big( #1 \big)}
\renewcommand{\d}[1]{\,\mathrm{d}#1}
\def\mathcolor#1#{\@mathcolor{#1}}
\def\@mathcolor#1#2#3{%
  \protect\leavevmode
  \begingroup
    \color#1{#2}#3%
  \endgroup
}
\newtheorem{theorem}{Theorem}[section]
\newtheorem{lemma}[theorem]{Lemma}
\crefname{assumption}{assumption}{assumptions}
\Crefname{Assumption}{Assumption}{Assumptions}
\theoremstyle{remark}
\newtheorem{remark}[theorem]{Remark}
\theoremstyle{definition}
\newtheorem{definition}[theorem]{Definition} 
\numberwithin{equation}{section}
\NewDocumentCommand{\derv}{O{} m}{\partial_{#2}^{#1}}
\NewDocumentCommand{\projLtwo}{s}{
  \IfBooleanTF{#1}{\Pi_{\meshdiam}}{\pi_{\meshdiam}}
}
\NewDocumentCommand{\grhs}{o}{
  \IfNoValueTF{#1}{
    g
  }{
    g_{\sol[#1]}
  }
}
\NewDocumentCommand{\grhshn}{o m}{
  \IfNoValueTF{#1}{
    g_{\meshdiam}^{#2}
  }{
    g_{\sol[#1], \meshdiam}^{#2}
  }
}
\NewDocumentCommand{\avggrhshn}{o m}{
  \IfNoValueTF{#1}{
    \overline{g}_{\meshdiam}^{#2}
  }{
    \overline{g}_{\sol[#1], \meshdiam}^{#2}
  }
}
\NewDocumentCommand{\sol}{o}{
  \IfNoValueTF{#1}{x}{
    \IfEqCase{#1}{
      {first}{u}
      {second}{v}
      {dummy}{w}
    }
  }
}
\NewDocumentCommand{\soltest}{o}{
  \IfNoValueTF{#1}{y}{
    \IfEqCase{#1}{
      {first}{\psi}
      {second}{\phi}
    }
  }
}
\NewDocumentCommand{\exsol}{o m}{\widehat{\sol[#1]}^{#2}}
\NewDocumentCommand{\solh}{o m}{\sol[#1]_{\meshdiam}^{#2}}
\NewDocumentCommand{\avgsolh}{o m}{\overline{\sol[#1]}_{\meshdiam}^{#2}}
\NewDocumentCommand{\soltesth}{o}{\soltest[#1]_{\meshdiam}}
\NewDocumentCommand{\material}{o}{
  \IfValueTF{#1}
    {\mathcal{M}_{\sol[#1]}}
    {\mathcal{M}}
}
\NewDocumentCommand{\friedop}{o}{
  \IfValueTF{#1}
    {\mathcal{L}_{\sol[#1]}}
    {\mathcal{L}}
}
\NewDocumentCommand{\friedopadj}{o}{
  \IfValueTF{#1}
    {\mathcal{L}_{\sol[#1]}^{\ast}}
    {\mathcal{L}^{\ast}}
}
\NewDocumentCommand{\friedcoeff}{o m}{
  \IfValueTF{#1}
    {L_{#2, \sol[#1]}}
    {L_{#2}}
}
\NewDocumentCommand{\friedoph}{o}{
  \IfValueTF{#1}
    {\mathcal{L}_{\sol[#1],{\meshdiam}}}
    {\mathcal{L}_{\meshdiam}}
}
\NewDocumentCommand{\friedophleapfrog}{o}{
  \friedoph[#1]^{\mathrm{lf}}
}
\NewDocumentCommand{\friedophmodified}{o}{
  \friedoph[#1]^{m}
}
\NewDocumentCommand{\friedbndop}{o}{
  \IfValueTF{#1}
    {\mathcal{L}_{\boundary, \sol[#1]}}
    {\mathcal{L}_{\boundary}}
}
\NewDocumentCommand{\friedbndfield}{o}{
  \IfValueTF{#1}
    {L_{\boundary, \sol[#1]}}
    {L_{\boundary}}
}
\NewDocumentCommand{\friedassbndop}{o}{
  \IfValueTF{#1}
    {\mathcal{L}_{\partial, \sol[#1]}}
    {\mathcal{L}_{\partial}}
}
\NewDocumentCommand{\friedassbndfield}{o m}{
  \IfValueTF{#1}
    {L_{\partial, \sol[#1]}^{#2}}
    {L_{\partial}^{#2}}
}
\NewDocumentCommand{\Id}{o}{
    \mathcal{I}_{\IfNoValueTF{#1}{}{\sol[#1]}}
}
\newcommand{\filfun}{\Psi}
\newcommand{\filfunhat}{\widehat{\filfun}}
\newcommand{\auxfilfun}{\Theta}
\newcommand{\auxfilfuncardi}{\varphi}
\NewDocumentCommand{\friedopsecondorder}{s o}{
  \IfBooleanTF{#1}{
    \mathcal{A}_{\IfValueT{#2}{\mathrm{#2}}}
  }{
    \friedoph[second] 
    \IfValueT{#2}{
      \IfEqCase{#2}{
        {lf}{\cutoffLeapfrog}
        {m}{\cutoffModified}
      }
    } \friedoph[first]
  }
}
\NewDocumentCommand{\filfunophat}{s o}{
  \IfBooleanTF{#1}
    {\filfunhat(-\tau^2\friedopsecondorder[#2])}
    {\mathbf{\filfunhat}}
}
\NewDocumentCommand{\filfunop}{s o}{
  \IfBooleanTF{#1}
    {\filfun(-\tau^2\friedopsecondorder[#2])}
    {\mathbf{\filfun}}
}
\NewDocumentCommand{\auxfilfunop}{s o}{
  \IfBooleanTF{#1}
    {\auxfilfun(-\tau^2\friedopsecondorder[#2])}
    {\mathbf{\auxfilfun}}
}
\newcommand{\RA}{\auxfilfunop_{\Id}}
\NewDocumentCommand{\perturbationop}{m o}{
  \IfValueTF{#2}
    {\mathbfcal{D}_{\mathrm{#1},\sol[#2]}}
    {\mathbfcal{D}_{\mathrm{#1}}}
}
\NewDocumentCommand{\auxfilfuncardiop}{s o}{
  \IfBooleanTF{#1}
    {\auxfilfuncardi(-\tau^2\friedopsecondorder[#2])}
    {\boldsymbol{\auxfilfuncardi}}
}
\NewDocumentCommand{\genop}{s o o}{
  \IfBooleanTF{#1}{\widehat{\mathbfcal{R}}}{\mathbfcal{R}}
  \IfValueT{#2}{_{%
    \IfEqCase{#2}{%
      {p}{+}%
      {m}{-}%
      {pm}{\pm}%
    }%
    \IfValueT{#3}{%
    \IfEqCase{#3}{%
        {CN}{,\mathrm{CN}}%
    }%
    }%
	}%
  	}%
	\IfValueT{#3}{^{%
	\IfEqCase{#3}{%
		{inv}{-1}%
	}}%
	}%
}
\NewDocumentCommand{\dofs}{o}{
  \IfValueTF{#1}
    {N_{\meshdiam, \sol[#1]}}
    {N_{\meshdiam}}
}
\NewDocumentCommand{\massmatrix}{o}{
  \IfValueTF{#1}
    {\mathbf{M}_{\sol[#1], \meshdiam}}
    {\mathbf{M}_{\meshdiam}}
}
\NewDocumentCommand{\friedmatrix}{o}{
  \IfValueTF{#1}
    {\mathbf{L}_{\sol[#1], \meshdiam}}
    {\mathbf{L}_{\meshdiam}}
}
\NewDocumentCommand{\friedmatrixmodified}{o}{
  \friedmatrix[#1]^{m}
}
\NewDocumentCommand{\solvector}{o m}{
  \mathbf{\sol[#1]}^{#2}
}
\NewDocumentCommand{\rhsvector}{o m}{
  \mathbf{g}^{#2}
}
\NewDocumentCommand{\scdordermatrixmodified}{s}{
  \IfBooleanTF{#1}
    {(\massmatrix[first]^m)^{-1}\friedmatrixmodified[second](\massmatrix[second]^m)^{-1}\friedmatrixmodified[first]}
    {\mathbf{A}_{m}}
}
\NewDocumentCommand{\scdordermatrixmodifiedmodified}{s}{
  \IfBooleanTF{#1}
    {\massmatrix[second]^{-1}\friedmatrixmodified[first]\massmatrix[first]^{-1}\friedmatrixmodified[second]}
    {\widehat{\mathbf{A}}_{m}}
}
\NewDocumentCommand{\defectTrapez}{o m}{\delta_{\mathrm{tr} \IfValueT{#1}{,\sol[#1]}}^{#2}}
\NewDocumentCommand{\err}{m o}{e_{\IfValueT{#2}{\sol[#2]}}^{#1}}
\NewDocumentCommand{\discerr}{m o}{e_{{\meshdiam} \IfValueT{#2}{, \sol[#2]}}^{#1}}
\NewDocumentCommand{\projerr}{o m}{e_{\pi\IfValueT{#1}{, \sol[#1]}}^{#2}}
\NewDocumentCommand{\projerrwithop}{o m O{}}{e_{\pi, \friedop[#1]#2}^{#3}}
\NewDocumentCommand{\defect}{o m o}{\delta_{\meshdiam \IfValueT{#1}{,\mathrm{#1}}\IfValueT{#3}{,\sol[#3]}}^{#2}}
\NewDocumentCommand{\defectCN}{o m}{\delta_{\meshdiam, \mathrm{CN} \IfValueT{#1}{,\sol[#1]}}^{#2}}
\NewDocumentCommand{\defectLF}{o m}{\delta_{\meshdiam, \mathrm{lf} \IfValueT{#1}{,\sol[#1]}}^{#2}}
\NewDocumentCommand{\defectLTI}{o m}{\delta_{\meshdiam, \mathrm{LTI} \IfValueT{#1}{,\sol[#1]}}^{#2}}
\NewDocumentCommand{\defectsmooth}{o m}{\delta_{\meshdiam \IfValueT{#1}{,\sol[#1]}, s}^{#2}}
\NewDocumentCommand{\defectfried}{o m}{\delta_{\meshdiam \IfValueT{#1}{,\sol[#1]}, \friedoph}^{#2}}
\NewDocumentCommand{\ipDiff}{o m}{\Delta_{\mathrm{m}\IfValueT{#1}{, \sol[#1]}}(#2)}
\NewDocumentCommand{\bilDiff}{o m m}{\Delta_{{#2}\IfValueT{#1}{, \sol[#1]}}(#3)}
\NewDocumentCommand{\polydegmost}{O{}}{\mathbb{Q}_{\dim}^{#1}}
\NewDocumentCommand{\polytotaldegmost}{O{}}{\mathbb{P}_{\dim}^{#1}}
\NewDocumentCommand{\normal}{O{} m}{\mathbf{n}^{#2}_{#1}}
\NewDocumentCommand{\problemdim}{o}{m_{\IfValueT{#1}{\sol[#1]}}}
\newcommand{\domain}{\Omega}
\newcommand{\boundary}{\Gamma}
\NewDocumentCommand{\meshdiam}{O{}}{h^{#1}}
\NewDocumentCommand{\maxmeshdiam}{O{}}{h_{\mathrm{max}}^{#1}}
\NewDocumentCommand{\minmeshdiam}{O{}}{h_{\mathrm{min}}^{#1}}
\NewDocumentCommand{\meshdiamCoarse}{O{}}{\meshdiam[#1]_c}
\NewDocumentCommand{\meshdiamFine}{O{}}{\meshdiam[#1]_f}
\NewDocumentCommand{\maxmeshdiamCoarse}{O{}}{\meshdiam[#1]_{c,\mathrm{max}}}
\NewDocumentCommand{\minmeshdiamCoarse}{O{}}{\meshdiam[#1]_{c,\mathrm{min}}}
\NewDocumentCommand{\maxmeshdiamFine}{O{}}{\meshdiam[#1]_{f,\mathrm{max}}}
\NewDocumentCommand{\minmeshdiamFine}{O{}}{\meshdiam[#1]_{f,\mathrm{min}}}
\newcommand{\mesh}{\mathcal{T}_{\meshdiam}}
\newcommand{\cell}{K}
\newcommand{\cellfine}{\cell_f}
\newcommand{\cutoffModified}{\chi_m}
\newcommand{\cutoffLeapfrog}{\chi_{\mathrm{lf}}}
\newcommand{\meshCoarse}{\mathcal{T}_{\meshdiam,c}}
\newcommand{\meshFine}{\mathcal{T}_{\meshdiam,f}}
\newcommand{\meshLeapfrog}{\mathcal{T}_{\meshdiam,\mathrm{lf}}}
\newcommand{\meshModified}{\mathcal{T}_{\meshdiam,m}}
\NewDocumentCommand{\norm}{m O{}}{\Vert #1 \Vert_{#2}}
\NewDocumentCommand{\seminorm}{m O{}}{\vert #1 \vert_{#2}}
\NewDocumentCommand{\dgip}{o m}{\ip{#2}_{\domain}}%
\NewDocumentCommand{\dgnorm}{o m}{\norm{#2}[\domain]}%
\NewDocumentCommand{\opnorm}{o m}{\norm{#2}[\dgspace[#1]\leftarrow\dgspace[#1]]}
\NewDocumentCommand{\normHkTh}{m m}{\norm{#2}[#1, \mesh]}
\NewDocumentCommand{\seminormHkTh}{m m}{\seminorm{#2}[#1, \mesh]}
\NewDocumentCommand{\seminormHkcell}{m m}{\seminorm{#2}[#1, \cell]}
\NewDocumentCommand{\normLoneLtwo}{m}{\norm{#1}[L^1([\tn{0}, T], \Ltwo[\domain])]}
\newcommand{\real}{\mathbb{R}}
\newcommand{\realvec}[1]{{\real}^{#1}}
\NewDocumentCommand{\frieddom}{o}{
  \mathcal{D}(\friedop[#1])
}
\NewDocumentCommand{\graphspace}{o}{H(\friedop[#1])}
\NewDocumentCommand{\dgspace}{o}{\mathbb{V}_{\meshdiam}\IfValueT{#1}{^{\sol[#1]}}}
\NewDocumentCommand{\brokenpoly}{s o m}{\polydegmost[#3](\mesh)\IfBooleanF{#1}{^{\problemdim[#2]}}}
\NewDocumentCommand{\sobolev}{m m}{H^{#1}(#2)}
\NewDocumentCommand{\brokensobolev}{s o m}{\sobolev{#3}{\mesh}\IfBooleanF{#1}{^{\problemdim[#2]}}}
\NewDocumentCommand{\dgfrieddom}{o}{V_{\meshdiam}^{\friedop[#1]}}
\NewDocumentCommand{\Ltwo}{o}{L^2\IfValueT{#1}{(#1)}}
\NewDocumentCommand{\Ltwovec}{o m}{L^2(#2)^{\problemdim[#1]}}
\NewDocumentCommand{\Linfty}{o}{L^{\infty}\IfValueT{#1}{(#1)}}
\NewDocumentCommand{\Linftymatrix}{o m}{L^{\infty}(#2)^{\problemdim[#1] \times \problemdim[#1]}}
\NewDocumentCommand{\sobolevinftymatrix}{o m m}{W^{#2, \infty}(#3)^{\problemdim[#1] \times \problemdim[#1]}}
\NewDocumentCommand{\brokensobolevinftymatrix}{o m}{\sobolevinftymatrix[#1]{#2}{\mesh}}
\newcommand{\dGdegree}{k}
\newcommand{\lfcdegree}{p}
\newcommand{\lfcstabparam}{\eta}
\newcommand*{\Pp}[1][\!\lfcdegree]{P_{#1}}
\newcommand*{\Tp}[1][\lfcdegree]{T_{\!#1}}
\newcommand*{\nup}{\nu_\lfcdegree}
\newcommand*{\alphap}[1][\lfcdegree]{\alpha_{#1}}
\newcommand*{\hbeta}[1][\filfun]{\beta_{#1}}
\NewDocumentCommand{\CFriedop}{o}{C_{\friedop[#1]}}
\NewDocumentCommand{\CFriedInvIneq}{o}{C_{\mathrm{inv}, \friedop \IfValueT{#1}{,\sol[#1]}}}
\NewDocumentCommand{\CFriedInvIneqCoarse}{o}{C_{\mathrm{inv}, \friedop, \IfValueT{#1}{\sol[#1],} c}}
\NewDocumentCommand{\Capprox}{o}{C_{\pi, \friedop \IfValueT{#1}{,\sol[#1]}}}
\newcommand{\LTSStabConst}{C_{stb,\, c}}
\newcommand{\Cproj}{C_{\pi}}
\newcommand{\cAuxfilfunmin}{c_{\auxfilfun}}
\newcommand{\cAuxfilfuncardi}{C_\auxfilfuncardi}
\newcommand{\cAuxfilfuncardimax}{\widehat{C}_\auxfilfuncardi}
\newcommand{\cAmin}{\cAuxfilfunmin}
\newcommand{\cflThetaLeapfrogCoarse}{\vartheta_c}
\newcommand{\cflThetaLeapfrogAll}{\vartheta}
\newcommand*{\tsCFLLFAll}{\ts_\mathrm{CFL, lf}}
\newcommand*{\tsCFLLFcoarse}{\ts_\mathrm{CFL, lf, c}}
\newcommand*{\tsCFLfilfun}{\ts_{\mathrm{CFL},\filfun}}
\newcommand{\exE}{E}
\newcommand{\exH}{H}
\newcommand{\exJ}{J}
\newcommand{\permitt}{\varepsilon}
\newcommand{\permeab}{\mu}
\newcommand{\cf}{cf.\xspace}
\newcommand{\ie}{i.e.\xspace}
\NewDocumentCommand{\dG}{s}{dG\IfBooleanF{#1}{\xspace}}
\NewDocumentCommand{\wellposed}{s}{well-posed\IfBooleanF{#1}{\xspace}}
\NewDocumentCommand{\Friedrichs}{s}{Friedrichs'\IfBooleanF{#1}{\xspace}}
\NewDocumentCommand{\Friedrichstitle}{s}{FRIEDRICHS'\IfBooleanF{#1}{\xspace}}
\NewDocumentCommand{\Maxwells}{s}{Maxwells\IfBooleanF{#1}{\xspace}}
\NewDocumentCommand{\discontinuous}{s}{discontinuous\IfBooleanF{#1}{\xspace}}
\NewDocumentCommand{\Galerkin}{s}{Galerkin\IfBooleanF{#1}{\xspace}}
\NewDocumentCommand{\Lts}{s}{Local time-stepping\IfBooleanF{#1}{\xspace}}
\NewDocumentCommand{\Ltstitle}{s}{LOCAL TIME-INTEGRATION\IfBooleanF{#1}{\xspace}}
\NewDocumentCommand{\lts}{s}{local time-stepping\IfBooleanF{#1}{\xspace}}
\NewDocumentCommand{\lti}{s}{local time-integration\IfBooleanF{#1}{\xspace}}
\NewDocumentCommand{\Lti}{s}{Local time-integration\IfBooleanF{#1}{\xspace}}
\NewDocumentCommand{\righthandside}{s}{right-hand-side\IfBooleanF{#1}{\xspace}}
\NewDocumentCommand{\leapfrog}{s}{leapfrog\IfBooleanF{#1}{\xspace}}
\NewDocumentCommand{\Leapfrog}{s}{Leapfrog\IfBooleanF{#1}{\xspace}}
\NewDocumentCommand{\CN}{s}{Crank-Nicolson\IfBooleanF{#1}{\xspace}}
\NewDocumentCommand{\LFC}{s}{Leapfrog-Chebychev\IfBooleanF{#1}{\xspace}}
\NewDocumentCommand{\lfc}{s}{leapfrog-Chebychev\IfBooleanF{#1}{\xspace}}
\NewDocumentCommand{\CFL}{s}{CFL\IfBooleanF{#1}{\xspace}}
\NewDocumentCommand{\selfadj}{s}{self-adjoint\IfBooleanF{#1}{\xspace}}
\NewDocumentCommand{\pd}{s}{positive definite\IfBooleanF{#1}{\xspace}}
\NewDocumentCommand{\psd}{s}{positive semi-definite\IfBooleanF{#1}{\xspace}}
\NewDocumentCommand{\wrt}{s}{w.r.t.\IfBooleanF{#1}{\xspace}}
\NewDocumentCommand{\timestepsize}{s}{time stepsize\IfBooleanF{#1}{\xspace}}
\newenvironment{abstr}[1]{ \vspace{.05in}\footnotesize
	\parindent .2in
	{\upshape\bfseries #1. }\ignorespaces}{\par\vspace{.1in}}
	\newenvironment{Abstract}{\begin{abstr}{Abstract}}{\end{abstr}}
	\newenvironment{keywords}{\begin{abstr}{Key words}}{\end{abstr}}
	\newenvironment{AMS}{\begin{abstr}{AMS subject classifications}}{\end{abstr}}
\newcommand{\mylabel}[2]{#2\def\@currentlabel{#2}\label{#1}}
\begin{document}

\title{
  Local time-integration for Friedrichs' systems%
  \thanks{Funded by the Deutsche Forschungsgemeinschaft (DFG, German Research Foundation) – Project-ID 258734477 – SFB 1173}
}

\author{
  Marlis Hochbruck\thanks{Institute for Applied and Numerical Mathematics, Karlsruhe Institute for Technology, 76131 Karlsruhe, Germany}
  \and Malik Scheifinger\footnotemark[2]
}
\date{}
\maketitle

\begin{Abstract}
In this paper, we address the full discretization of Friedrichs' systems with a two-field structure, such as Maxwell's equations or the acoustic wave equation in div-grad form, cf.~\cite{MFO2023}. We focus on a discontinuous Galerkin space discretization applied to a locally refined mesh or a small region with high wave speed. This results in a stiff system of ordinary differential equations, where the stiffness is mainly caused by a small region of the spatial mesh.
When using explicit time-integration schemes, the time step size is severely restricted by a few spatial elements, leading to a loss of efficiency. As a remedy, we propose and analyze a general leapfrog-based scheme which is motivated by~\cite{CarH22}. The new, fully explicit, local time-integration method filters the stiff part of the system in such a way that its CFL condition is significantly weaker than that of the leapfrog scheme while its computational cost is only slightly larger. For this scheme, the filter function is a suitably scaled and shifted Chebyshev polynomial. 
While our main interest is in explicit local-time stepping schemes, the filter functions can be much more general, for instance, a certain rational function
leads to the locally implicit method, proposed and analyzed in~\cite{HocS16}. Our analysis provides sufficient conditions on the filter function to ensure full order of convergence in space and second order in time for the whole class of local time-integration schemes.
\end{Abstract}

\begin{keywords}
time integration, 
space discretization,
Friedrichs' system,
wave-type problems,
local time-stepping methods, 
locally implicit methods,
stability analysis, 
error analysis, 
discontinuous Galerkin
\end{keywords}

\begin{AMS}
65M12, 65M15, 65M22
\end{AMS}

\section{Introduction}\label{sec:intro}

The aim of this paper is to construct and analyze a new local time-integration (LTI) method for the discontinuous Galerkin (dG) space discretization of \Friedrichs systems in a two-field structure. 
Using an explicit time-integrator like the \leapfrog method on the full spatial domain necessitates satisfying a Courant–Friedrichs–Lewy (\CFL) condition, meaning that the \timestepsize $\ts$ is proportional to the inverse of the smallest diameter within the spatial mesh. 
On the other hand, implicit methods like the \CN method are unconditionally stable, but require the solution of large linear systems of equations. 
We are interested in problems, where the \CFL condition is dominated by a small number of mesh elements, i.e., tiny elements or elements with a high wave speed. 
In this situation, standard explicit or implicit methods are inefficient. The basic idea of LTI methods is to use an explicit time-integrator, for example the \leapfrog method, on the large, nonstiff part of the system and to couple it to a tailored method on the small, stiff part. The modified method can either be an explicit method with a weaker CFL condition (resulting in a \lts (LTS) method) or an unconditionally stable implicit scheme (leading to a locally-implicit (LI) method). Overall one aims at using a method with a \CFL condition which only depends on the nonstiff part of the system. Since the higher computational cost of the modified method arises only on a small part of the degrees of freedom, the construction leads to a problem adapted and efficient local time-integration scheme.

In recent years, many LTI schemes were introduced for different wave-type equations. LTS methods for the acoustic wave equation in second-order formulation were constructed in \cite{AlmM17,CarH24,DiaG09,DiaG15,GroMM15,GroMS21,GroM15,GroM13}, for \Maxwells equations in second-order formulation in~\cite{GroM10,KotFL20,MonPFC08,Pip06} and LI methods for \Maxwells equations in~\cite{DesLM13,DesLM14,DesLM17,DolFFL10,HocS16,HocS19,BonR02,Ver11}, for instance. Moreover, in~\cite{MFO2023} the LI method from~\cite{HocS16} was generalized to \Friedrichs systems.

However, to the best of our knowledge fully explicit LTS methods for \Friedrichs systems have not been considered so far. 
Here, we construct and analyze a class of \leapfrog-based LTI schemes 
motivated by~\cite{CarH24}. 
We will show that the method is stable under a \CFL condition which is independent of the stiff part of the problem and is of second-order in time and optimal order in space.
As a special case, one variant of the LTI scheme corresponds to the LI method from~\cite{MFO2023} and the theoretical results reduce to the ones given there.

\paragraph{Outline}
The paper is structured as follows. First, in \Cref{sec:friedrichs}, we briefly review \Friedrichs systems and introduce the notation which we will use throughout the paper.

Our main results are contained in \Cref{sec:lts-scheme}. Here, we start with an introduction of the splitting of the discrete operators into a stiff and a nonstiff part. Then, we present the new LTI scheme based on the \leapfrog method with a general filter function and study its stability. Important examples leading to LTS and LI methods are discussed in detail. 
For the whole class of methods, we prove stability if a \timestepsize restriction based on properties of the filter function is satisfied. Moreover, we prove that LTI methods satisfy error bounds with optimal order in space and second-order in time. 

At last, in \Cref{sec:numerical-examples}, we present numerical experiments which substantiate our theoretical results for \Maxwells equations. Moreover, we show that the LTS method outperforms the LI and the \leapfrog method on these test problems.

\section{Friedrichs' systems}  \label{sec:friedrichs}
Let \(\domain\subset\realvec{\dim}\) be an open, polygonal domain and denote its boundary by \(\boundary = \partial\domain\). %

Following~\cite[\S 11.2]{MFO2023} we consider the two-field structured \Friedrichs system
\begin{subequations}\label{eq:two-field-system}
    \begin{align}
        \derv{t}\sol[first] &= \friedop[second]\sol[second] + \grhs[first], &&\text{on } \domain\times\real_+, \\
        \derv{t}\sol[second] &= \friedop[first]\sol[first] + \grhs[second],  &&\text{on } \domain\times\real_+, \\
        \sol[first](0) &= \sol[first]^0, \quad \sol[second](0) = \sol[second]^0, &&\text{on } \domain \nonumber
    \end{align}
\end{subequations}
with initial values \(\sol[first]^0, \sol[second]^0\) and right-hand-side \(\grhs\).
The \Friedrichs operators \(\friedop[first]\) and \(\friedop[second]\) are given as
\begin{equation}\label{eq:two-field-friedops}
    \material[second]\friedop[first]\sol[first] = \sum_{i = 1}^{\dim} \friedcoeff{i}\partial_i\sol[first], 
    \qquad
    \material[first]\friedop[second]\sol[second] = \sum_{i = 1}^{\dim} \friedcoeff{i}^{\mathrm{T}}\partial_i\sol[second],
\end{equation}
with \(\friedcoeff{i}\in\real^{\problemdim[second]\times\problemdim[first]}\) for \(i = 0,\dots,\dim\). 
The material tensors \(\material[first]\in\Linftymatrix[first]{\domain}\), \(\material[second]\in\Linftymatrix[second]{\domain}\) are assumed to be symmetric and positive definite, and \(\problemdim[first], \problemdim[second]\in\bbN\) denote the number of components of \(\sol[first]\) and \(\sol[second]\).
The boundary conditions of~\eqref{eq:two-field-system} are embodied into the domains \(\frieddom[first]\) and \(\frieddom[second]\), see~\cite{DiPE12,MFO2023} for details.
We further assume that the boundary conditions do not introduce damping, \cf~\cite[Assump. 11.8]{MFO2023}. 
This leads to the important adjointness property
\begin{equation}\label{eq:friedop-adjointness}
    \ip{\friedop[first]\sol[first], \sol[second]} 
    = 
    - \ip{ \sol[first],\friedop[second]\sol[second]}
    \qquad\text{for all } 
    \sol[first]\in\frieddom[first], \sol[second]\in\frieddom[second] .
\end{equation}
Special cases are linear \Maxwells equations or the acoustic wave-equation in div-grad formulation, for details see~\cite[\S 7.1]{DiPE12} or~\cite[\S 9.3]{MFO2023}.

\subsection{Notation}\label{sec:notation}
Let \(\cell\subseteq\realvec{\dim}\) be open. For two vector-valued functions \(\sol[first], \tilde{\sol[first]}\in\Ltwovec[first]{\cell}\) we denote the \(\Ltwo\)-inner product by
\begin{equation*}
    \ip{\sol[first], \tilde{\sol[first]}}_{\Ltwo[\cell]} = \int_\cell \sol[first]\cdot\tilde{\sol[first]}\d{x} 
\end{equation*}
and the \(\Ltwo\)-inner product weighted by \(\material[first]\) as
\begin{align*}
  \ip{\sol[first], \Tilde{\sol[first]}}_{\cell} 
  = \ip{\sol[first], \Tilde{\sol[first]}}_{\material[first], \cell} 
    = \ip{
        \material[first]\sol[first], 
        \Tilde{\sol[first]}
    }_{\Ltwo[\cell]},%
\end{align*}
where we supress the indices \(\material[first]\) whenever possible. The inner product weighted by \(\material[second]\) is defined analogously. From  the arguments of the inner product it is clear which weight is used.
The induced norms are denoted accordingly.

Furthermore, we denote by \(\partial_i\) the \(i\)th weak derivative, \(i = 1,\dots,\dim\). Higher derivatives are denoted by \(\partial^\alpha = \partial_1^{\alpha_1}\dots\partial_d^{\alpha_d}\) for a multi index \(\alpha\in\bbN_0^d\), and we write \(\abs{\alpha} = \alpha_1 + \dots + \alpha_d\).

The Sobolev spaces in \(\Ltwo(\cell)\) are denoted as \(\sobolev{\ell}{\cell}\), \(\ell\in\bbN_0\), and are equipped with the weighted norms
\begin{equation*}
  \norm{\sol[first]}[\ell, \cell]^2
  =
    \norm{\sol[first]}[\material[first], \ell, \cell]^2
    =
    \sum_{j=0}^{\ell} \abs{\sol[first]}_{\material[first], j, \cell}^2,
    \qquad
    \abs{\sol[first]}_{j, \cell}^2    
    =\abs{\sol[first]}_{\material[first], j, \cell}^2
    =
    \sum_{\abs{\alpha} = j} \norm{\partial^\alpha\sol[first]}[\material[first], \cell]^2.
\end{equation*}
Again we suppress the index \(\material[first]\) whenever possible. 

In what follows, we recall some notation and results from~\cite{DiPE12, MFO2023}. We denote by \(\mesh\) meshes of \(\domain\) which we assume to be admissible~\cite[Def. 1.57]{DiPE12}. By \(\meshdiam_\cell\) we denote the diameter of a cell \(\cell\in\mesh\). We further define \(\meshdiam\in\Linfty(\domain)\) by \(\meshdiam\vert_\cell = \meshdiam_\cell\) and \(\maxmeshdiam = \max_{\cell} \meshdiam_\cell\) as well as \(\minmeshdiam = \min_{\cell} \meshdiam_\cell\).

For each component of~\eqref{eq:two-field-system} we denote by 
\begin{align*}
    \dgspace[first] 
    &= 
    \{\sol[first]\in\Ltwo[\domain] \ \vert \ \sol[first]\vert_\cell\in\polytotaldegmost[\dGdegree](\cell) \text{ for all } \cell\in\mesh\}^{\problemdim[first]}, \\
    \dgspace[second] 
    &= 
    \{\sol[second]\in\Ltwo[\domain] \ \vert \ \sol[second]\vert_\cell\in\polytotaldegmost[\dGdegree](\cell) \text{ for all } \cell\in\mesh\}^{\problemdim[second]}, 
\end{align*}
the spaces of broken polynomials of total degree at most \(\dGdegree\) and the product space by \(\dgspace = \dgspace[first] \times \dgspace[second]\).

Furthermore, we symbolize the \(\Ltwo\)-projection \wrt the weighted inner product \(\ip{\cdot, \cdot}_{\domain}\) by \(\projLtwo\colon\Ltwo[\domain]\to\dgspace\), see~\cite{MFO2023} for details.

At last, we recall the broken Sobolev spaces
\begin{equation*}
    \brokensobolev*[first]{\ell} 
    = 
    \{
        \sol[first]\in\Ltwo[\domain]
        \ \vert \ 
        \sol[first]\vert_\cell\in\sobolev{\ell}{\cell}\text{ for all } \cell\in\mesh
    \}.
\end{equation*}
with weighted norms
\begin{equation*}
    \normHkTh{\ell}{\sol[first]}^2 = \sum_{j=0}^{\ell} \seminormHkTh{j}{\sol[first]}^2, 
    \qquad 
    \seminormHkTh{j}{\sol[first]}^2 = \sum_{\cell\in\mesh}\seminormHkcell{j}{\sol[first]}^2.
\end{equation*}

\section{Local time-integration}\label{sec:lts-scheme}

After a central-fluxes \dG discretization~\cite{MFO2023,ErnG21,HocK20}, the spatially discrete wave-type problem~\eqref{eq:two-field-system} takes the form
\begin{subequations}\label{eq:two-field-dg-system}
    \begin{align}
        \derv{t}\solh[first]{} &= \friedoph[second]\solh[second]{} + \grhshn[first]{}, &\text{on } \real_+, \\
        \derv{t}\solh[second]{} &= \friedoph[first]\solh[first]{} + \grhshn[second]{},  &\text{on } \real_+, \\
        \solh[first]{}(0) &= \projLtwo\sol[first]^0, \quad \solh[second]{}(0) = \projLtwo\sol[second]^0, \quad  \grhshn{} = \projLtwo\grhs, &
    \end{align}
\end{subequations}
where the discrete operators \(\friedoph[first], \friedoph[second]\) inherit the adjointness property~\eqref{eq:friedop-adjointness}, \ie,
\begin{equation}\label{eq:adjointness}
    \ip{\friedoph[first]\solh[first]{}, \solh[second]{}}_{\domain} 
    = -\ip{\solh[first]{}, \friedoph[second]\solh[second]{}}_{\domain}
    \qquad 
    \text{for all }\solh[first]{}\in\dgspace[first],\ \solh[second]{}\in\dgspace[second] .
\end{equation}
Before we present our \lti scheme, we follow~\cite{HocS16} and split the mesh into two parts
\begin{equation}
    \mesh = \meshFine\; \dot{\cup}\; \meshCoarse.
\end{equation}
The set \(\meshFine\) contains all elements with a small diameter or a high wave speed and the set \(\meshCoarse\) all others. We further define  
\begin{align}
    \minmeshdiamCoarse = \min_{\cell\in\meshCoarse} \meshdiam_\cell, 
    \qquad 
    \minmeshdiamFine = \min_{\cell\in\meshFine} \meshdiam_\cell .
\end{align}
Our approach necessitates an additional division of the mesh such that the set \(\meshModified\) contains also the neighbors of elements in \(\meshFine\) and \(\meshLeapfrog\) only contains coarse elements with coarse neighbors, \ie,  
\begin{align*}
    \meshModified &= \{\cell\in\mesh\ \colon \ \exists \cellfine\in\meshFine~\text{s.t.}~ \cell, \cellfine \text{ share a face}\}, \\ %
    \meshLeapfrog &= \mesh\setminus\meshModified .
\end{align*}
This one extra layer of coarse elements in \(\meshModified\) is essential to the independence of small mesh diameters or high wave speeds in the CFL condition as we will prove later on.

We further define the cutoff-operators
\begin{equation}\label{lts:eq:cutoff}
    \cutoffModified\sol[first] = 
    \begin{cases}
        \sol[first],\ \text{on}\ \cell\in\meshModified, \\
        0,\ \text{on}\ \cell\in\meshLeapfrog,
    \end{cases}
    \quad\text{and}\quad
    \cutoffLeapfrog\sol[first] = 
    \begin{cases}
        0,\ \text{on}\ \cell\in\meshModified, \\
        \sol[first],\ \text{on}\ \cell\in\meshLeapfrog .
    \end{cases}
\end{equation}
Now we are able to propose our \lti method. With a smooth function \(\filfun\colon [0, \infty)\to\real\) satisfying \(\filfun(0) = 1\), the fully discrete \lti scheme is defined as
\begin{subequations}\label{eq:lts-scheme}
    \begin{align}
        \avgsolh[second]{n+1/2} - \solh[second]{n} &= \tauh\friedoph[first]\solh[first]{n} + \tauh\grhshn[second]{n+1/2}, \label{eq:lts-scheme-a} \\
        \solh[first]{n+1} - \solh[first]{n} &= \tau\filfunop\left(\friedoph[second]\avgsolh[second]{n+1/2} + \grhshn[first]{n+1/2}\right), \label{eq:lts-scheme-b} \\
        \solh[second]{n+1} - \avgsolh[second]{n+1/2} &= \tauh\friedoph[first]\solh[first]{n+1} + \tauh\grhshn[second]{n+1/2}, \label{eq:lts-scheme-c} \\
        \filfunop &= \filfunop*[m]\ . \label{eq:lts-scheme-d}
    \end{align}
\end{subequations}
Note that in~\cite{CarH24} we used the notation \(\filfunhat\) instead of \(\filfun\).

We denote by 
\begin{equation}\label{eq:avg-rhs-def}
	\grhshn{n+1/2} = \tfrac12(\grhshn{n+1} + \grhshn{n})
\end{equation}
the average of $\grhshn{}$ at two consecutive time steps. Note that the filter function \(\filfunop\) and hence the modified method only work on \(\meshModified\) plus coupling and that for \(\filfun \equiv 1\), the scheme reduces to the \leapfrog scheme.

The remaining part of this paper is devoted to the proof of the following full-discretization error estimate.
\begin{theorem}\label{Thm:error-estimate-shortened}
	For \(\solh{n} = \pmat{\solh[first]{n} & \solh[second]{n}}\) given by~\eqref{eq:lts-scheme} with \(\tau\) satisfying a \CFL condition independent of \(\minmeshdiamFine\) and a sufficiently regular solution \(\sol = \pmat{\sol[first] & \sol[second]}\) of~\eqref{eq:two-field-system} we obtain
	\begin{align}
		\dgnorm{\sol(\tn{n}) - \solh{n}} 
		&\leq 
        C(\tau^2 + \maxmeshdiam[\dGdegree])
	\end{align}
	with a constant \(C\) independent of \(\tau\) and \(\maxmeshdiam\).
\end{theorem}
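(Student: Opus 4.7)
The plan is to begin with the standard decomposition
\[
\sol(\tn{n}) - \solh{n} = \bigl(\sol(\tn{n}) - \projLtwo\sol(\tn{n})\bigr) + \bigl(\projLtwo\sol(\tn{n}) - \solh{n}\bigr),
\]
so that the projection part is handled by standard dG approximation estimates (contributing the $\maxmeshdiam[\dGdegree]$ term of the bound), and it remains to control the discrete error $e_h^n = \projLtwo\sol(\tn{n}) - \solh{n}$, which lives in $\dgspace$.

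To derive a defect equation, I would insert the projected exact solution into the three substeps~\eqref{eq:lts-scheme-a}--\eqref{eq:lts-scheme-c}. This produces defects on the right-hand sides. Each defect splits into a temporal contribution (from Taylor expanding $\sol$ around $\tn{n+1/2}$ and from the averaging~\eqref{eq:avg-rhs-def}) and a spatial contribution (from the commutator $\projLtwo\friedop - \friedoph\projLtwo$ and from the action of $\filfun(-\ts^2\friedopsecondorder*[m])$ on projected fields). Smoothness of $\sol$ together with approximation properties of $\projLtwo$ should yield defect bounds of the form $C(\ts^2 + \maxmeshdiam[\dGdegree])$ in the dG norm uniformly in $n$. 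Crucially, the $\ts^2$ part relies on $\filfun(0) = 1$ and on $\filfun$ approximating the identity to sufficient order near the origin, so that the filter does not degrade the consistency on nonstiff modes.

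Stability of the resulting perturbed recursion for $e_h^n$ is the heart of the argument. After eliminating the auxiliary step $\avgsolh[second]{n+1/2}$, the scheme becomes a two-step leapfrog-type recursion driven by $\filfun(-\ts^2\friedopsecondorder*[m])$. Using the adjointness~\eqref{eq:adjointness}, the cutoffs~\eqref{lts:eq:cutoff}, and the normalization $\filfun(0)=1$, one derives a discrete energy identity in a $\filfun$-weighted norm that is coercive provided $\ts \le \tsCFLfilfun$. The extra coupling layer absorbed into $\meshModified$, together with the action of $\cutoffModified$ inside $\friedopsecondorder*[m]$, is what forces $\tsCFLfilfun$ to depend only on the nonstiff part of $\spectrum{\friedopsecondorder*[m]}$ and hence to be independent of $\minmeshdiamFine$. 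A discrete Gronwall estimate then combines stability with the defect bound to give the claimed rate.

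The main obstacle will be this stability analysis. One must verify simultaneously that $\filfun(-\ts^2\friedopsecondorder*[m])$ stays uniformly bounded on the stiff part of $\spectrum{\friedopsecondorder*[m]}$ (so that the fine elements do not spoil the CFL), while still behaving like the identity to sufficient order on the nonstiff part (so that the temporal consistency remains $O(\ts^2)$), and that the associated weighted energy is genuinely positive under the weak CFL constraint. The extremal cases $\filfun \equiv 1$ (plain \leapfrog with the global CFL) and the rational choice corresponding to the locally implicit method of~\cite{HocS16,MFO2023} serve as benchmarks against which the new filter design must be calibrated.
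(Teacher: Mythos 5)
Your overall architecture coincides with the paper's: split off the \(L^2\)-projection error, derive a defect recursion by inserting \(\projLtwo\sol\) into the scheme, and prove stability by an energy argument under a filter-dependent CFL condition (the paper does this by recasting \eqref{eq:lts-scheme} as the one-step recursion \(\genop[m]\solh{n+1}=\genop[p]\solh{n}+\tau\grhshn{n+1/2}\) of \Cref{Lem:lts-scheme-transf} and showing in \Cref{lts:lem:Rpm-properties} that powers of \(\genop=\genop[m][inv]\genop[p]\) exactly preserve the \(\genop[m]\)-weighted energy, so no Gronwall factor is needed).

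The genuine gap is in your treatment of the defect. You propose to bound the full defect directly in the dG norm by \(C(\tau^2+\maxmeshdiam[\dGdegree])\), uniformly in \(n\), and then sum. This fails for the contribution generated by the filter: as in \eqref{eq:defect-split-CN-LTI}, that contribution has the form \(\friedoph\defect[LTI]{j}\) with \(\defect[LTI]{j}=\perturbationop{LTI}\projLtwo\dtau\exsol{j}\), and \(\perturbationop{LTI}\) contains the cutoffs \(\cutoffModified,\cutoffLeapfrog\). The cutoff of a (projected) smooth function is no longer in the domain of the continuous operator, so the consistency property \eqref{eq:consistency} is unavailable for the outer \(\friedoph\); only an inverse estimate applies, giving \(\dgnorm{\friedoph\defect[LTI]{j}}\lesssim \meshdiam[-1]\dgnorm{\defect[LTI]{j}}=O(\tau^2\meshdiam[-1])=O(\tau)\) under the CFL condition, hence only first order in time after summation. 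This is precisely the order reduction the paper warns about before \Cref{thm:lts-abstract_error_linear_full}. The missing idea is the summation-by-parts argument of that theorem: since \(\tau\genop[m][inv]\friedoph\defectfried{j}=(\genop-\Id)\defectfried{j}\), the outer \(\friedoph\) is absorbed into the bounded operator \(\genop-\Id\), and Abel summation \eqref{lts:eq:sum_by_parts} shifts the sum onto the discrete time differences \(\dtau\defectfried{j}\), which recover the missing power of \(\tau\) from the temporal smoothness of the solution. Without this step your route delivers \(O(\tau)\), not \(O(\tau^2)\). A secondary inaccuracy: \(\tsCFLfilfun\) is \emph{not} determined by the nonstiff part of the spectrum alone — by \eqref{eq:lfc-cfl-a} it involves \(\norm{\friedopsecondorder[m]}\lesssim\minmeshdiamFine[-2]\); independence of \(\minmeshdiamFine\) is bought by letting \(\hbeta^2\) grow with the Chebyshev degree \(p\), while it is the second condition \(\tau\le\tsCFLLFcoarse\), involving only \(\friedopsecondorder[lf]\), that exploits the extra layer of coarse cells in \(\meshModified\).
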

Details on the \CFL condition will be given later.

\subsection{Stability}\label{subsec:stability}

In the following, we assume that \(\filfunop\) defined in \eqref{eq:lts-scheme-d} is invertible, which is indeed the case under a certain restriction on the time stepsize  as we will later see. We start by rewriting~\eqref{eq:lts-scheme} into a system inspired by the analysis of the \CN method~\cite{HocS16} imitating a discrete semi-group technique. To do so, we define 
\begin{subequations}
\begin{align}\label{eq:lts-R-operators}
	\genop[pm]
	&= 
	\pmat{
		\auxfilfunop & \pm\tauh\friedoph[second] \\ 
		\pm\tauh\friedoph[first] & \Id[second]
	}
	-
	\tausf\pmat{
		-\friedopsecondorder[lf] & 0 \\ 
		0 & 0
	}
\end{align}
with
\begin{align}\label{eq:lts-thetafun}
	\auxfilfunop &= \auxfilfunop*[m],\\
	\label{eq:auxfilfun}
	\auxfilfun(z) 
	&= \filfun(z)^{-1} \bigl(1 - \frac{z}{4} \filfun(z) \bigr)
	= \frac{1}{\filfun(z)} - \frac{z}{4},
	\qquad
	\auxfilfun(0) = 1 .
\end{align}
\end{subequations}
In~\Cref{tab:methods-filfuns-vals}, we collect special choices of $\filfun$ and $\auxfilfun$ together with certain parameters, which we will introduce later for the analysis. More details will be given in Section~\ref{subsec:special-filfuns} below.
\begin{lemma}\label{Lem:lts-scheme-transf}
	The approximations \(\solh{n} = \pmat{\solh[first]{n} & \!\! \solh[second]{n}}\) of the \lti\ scheme \eqref{eq:lts-scheme}
	satisfy
	\begin{equation}\label{eq:lts-scheme-transf}
		\genop[m]\solh{n+1} = \genop[p]\solh{n} + \tau\grhshn{n+1/2}, 
		\qquad 
		\grhshn{n+1/2} = \pmat{\grhshn[first]{n+1/2} \\ \grhshn[second]{n+1/2}}.
	\end{equation}
\end{lemma}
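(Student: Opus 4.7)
The plan is to combine the three update steps of~\eqref{eq:lts-scheme} into the single block system~\eqref{eq:lts-scheme-transf} by eliminating the intermediate stage $\avgsolh[second]{n+1/2}$. I split the work into deriving the two rows of~\eqref{eq:lts-scheme-transf} separately.

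For the second (bottom) row, I would simply add the first and third stages~\eqref{eq:lts-scheme-a} and~\eqref{eq:lts-scheme-c}. This eliminates $\avgsolh[second]{n+1/2}$ and yields
\(
\solh[second]{n+1} - \solh[second]{n} = \tauh\friedoph[first](\solh[first]{n} + \solh[first]{n+1}) + \tau\grhshn[second]{n+1/2},
\)
which, after moving the $\solh[first]{n+1}$-term to the left, is exactly the second row of $\genop[m]\solh{n+1} = \genop[p]\solh{n} + \tau\grhshn{n+1/2}$.

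For the first (top) row, I would start from~\eqref{eq:lts-scheme-b}. Using the assumed invertibility of $\filfunop$ and the identity $\auxfilfun(z) = 1/\filfun(z) - z/4$ from~\eqref{eq:auxfilfun}, applied to $-\tau^2\friedopsecondorder[m]$, gives $\filfunop^{-1} = \auxfilfunop - \tausf\,\friedopsecondorder[m]$. Multiplying~\eqref{eq:lts-scheme-b} by $\filfunop^{-1}$ yields
\(
(\auxfilfunop - \tausf\,\friedopsecondorder[m])(\solh[first]{n+1} - \solh[first]{n}) = \tau\friedoph[second]\avgsolh[second]{n+1/2} + \tau\grhshn[first]{n+1/2}.
\)
To eliminate $\avgsolh[second]{n+1/2}$ I would average~\eqref{eq:lts-scheme-a} and~\eqref{eq:lts-scheme-c}, which gives the compact expression $\avgsolh[second]{n+1/2} = \tfrac{1}{2}(\solh[second]{n+1} + \solh[second]{n}) - \tauf\friedoph[first](\solh[first]{n+1} - \solh[first]{n})$. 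Substituting this produces the term $\tausf\friedoph[second]\friedoph[first](\solh[first]{n+1}-\solh[first]{n})$ on the right; bringing it to the left and using the key identity $\friedoph[second]\friedoph[first] = \friedopsecondorder[m] + \friedopsecondorder[lf]$, which follows from $\cutoffModified + \cutoffLeapfrog = \Id$ together with the definitions in~\eqref{lts:eq:cutoff}, the $\friedopsecondorder[m]$-contributions cancel and leave $\auxfilfunop + \tausf\,\friedopsecondorder[lf]$ acting on $\solh[first]{n+1} - \solh[first]{n}$. Rearranging into an equation relating the $n{+}1$ and $n$ states reproduces the top row of~\eqref{eq:lts-scheme-transf}.

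The computation is essentially algebraic bookkeeping, so the only delicate point is making the operator splitting line up correctly: the filter function $\filfunop$ depends on $\friedopsecondorder[m]$, but the block matrices $\genop[pm]$ involve $\friedopsecondorder[lf]$. The cancellation hinges on the single identity $\friedoph[second]\friedoph[first] = \friedopsecondorder[m] + \friedopsecondorder[lf]$, which is exactly why the cut-off operators were introduced in~\eqref{lts:eq:cutoff} and why $\auxfilfun$ was defined with the particular shift $-z/4$ in~\eqref{eq:auxfilfun}. Once that identity is invoked, both rows assemble into precisely~\eqref{eq:lts-scheme-transf}, completing the proof.
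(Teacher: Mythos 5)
Your derivation is correct and complete: adding \eqref{eq:lts-scheme-a} and \eqref{eq:lts-scheme-c} yields the second row, while averaging them to express \(\avgsolh[second]{n+1/2}\), multiplying \eqref{eq:lts-scheme-b} by \(\filfunop^{-1}=\auxfilfunop-\tausf\friedopsecondorder[m]\), and invoking \(\friedoph[second]\friedoph[first]=\friedopsecondorder[m]+\friedopsecondorder[lf]\) yields the first row, which is exactly the intended computation (the paper states the lemma without proof). The only point worth flagging is that you rely on the invertibility of \(\filfunop\), which the paper likewise assumes at this stage and only justifies later via the \CFL condition \(\tau\leq\tsCFLfilfun\).
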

A key idea is  to split $\genop[pm]$ into a positive definite operator $\RA$, a skew-adjoint operator 
$\friedoph$, and a self-adjoint, positive semidefinite perturbation operator $\perturbationop{lf}$ via
\begin{subequations}\label{eq:genop-splitting-all}
\begin{equation}\label{eq:genop-splitting}
     \genop[pm]= \RA \pm \tauh\friedoph - \tausf\perturbationop{lf} ,
\end{equation}
where
\begin{equation}\label{eq:perturb-op}
    \RA = \pmat{
		\auxfilfunop & 0 \\
		0            & \Id[second]
	},
    \quad
    \friedoph = \pmat{
        0 & \friedoph[second] \\
        \friedoph[first] & 0
    },
    \quad
	\perturbationop{lf} = \pmat{
		-\friedopsecondorder[lf] & 0 \\ 
		0                      & 0
	} .
\end{equation}
\end{subequations}
Here, the adjointness condition~\eqref{eq:adjointness} directly implies that \(\friedoph\) is skew-adjoint, \(\perturbationop{lf}\) and $\auxfilfunop$ are self-adjoint, and that \(\perturbationop{lf}\) is positive semidefinite.
However, the definiteness of $\RA$ is only guaranteed for special choices of $\auxfilfun$ (see Table~\ref{tab:methods-filfuns-vals}) or under additional conditions on its argument $-\ts^2\friedopsecondorder[m]$ in \eqref{eq:lts-thetafun}. To ensure this, we introduce additional notation. 
\begin{definition} \label{def:ctheta}
	Let \(\filfun\colon [0, \infty)\to\real\) be a smooth function satisfying \(\filfun(0) = 1\) and consider $\auxfilfun$ defined in \eqref{eq:auxfilfun}. 
    For a constant $\cAuxfilfunmin \in (0,1]$ we define $\hbeta = \hbeta(\cAuxfilfunmin) \in (0,\infty]$ as the maximum value such that
    \begin{equation} \label{eq:ctheta}
        0 < \filfun(z) \leq 1 
        \quad 
        \text{and}
        \quad
        \auxfilfun(z) \geq \cAuxfilfunmin
        \qquad
        \text{for all } z \in [0,\hbeta^2] \cap \real,
    \end{equation}
    and $\hbeta=\infty$, if \eqref{eq:ctheta} holds for all $z\geq0$.
\end{definition}
\begin{table}
\setlength{\tabcolsep}{5pt}
\begin{center}
    \begin{tabular}{|c|c c c|c c c|}
  		\cline{2-7}
		\multicolumn{1}{c|}{\rule{0pt}{4mm}} & $\filfun$ & $\auxfilfun$ & $\auxfilfuncardi$ & $\hbeta^2$ & $\cAuxfilfunmin$ & $\cAuxfilfuncardi$ \\[0.7ex]
        \hline%
        \rule{0pt}{4mm}\Leapfrog & $1$ & $1 - \tfrac{z}{4}$ & $-\tfrac14$ & $4 (1-\cAuxfilfunmin)$ %
        & $1 - \cflThetaLeapfrogAll^2$ & $\tfrac14$ \\[0.7ex]
        \hline
        \rule{0pt}{4mm}\CN & $ (1 + \tfrac{z}{4})^{-1}$ & $1$ & $0$ & $\infty$ & $1$ & $0$ \\[0.7ex]
        \hline
        \rule{0pt}{4mm}\LFC & \eqref{eq:lfc-polynomials} & \eqref{eq:auxfilfun} & \eqref{eq:auxfilfuncardi} & \eqref{eq:lfc-beta} & \eqref{eq:lfc-filfunBound-constants} & \eqref{eq:lfc-beta} \\[0.7ex]
        \hline
    \end{tabular}
\end{center}
\caption{Examples for filter functions with corresponding constants.}
\label{tab:methods-filfuns-vals}
\end{table}
The parameters $\hbeta^2$ and $\cAuxfilfunmin$ directly enter the  \CFL conditions to ensure stability of the scheme.
\begin{definition}\label{def:lts-CFL}
	For given \(\cflThetaLeapfrogAll, \cflThetaLeapfrogCoarse \in (0, 1]\) we define
	\begin{subequations}\label{eq:lfc-cfl}
	\begin{align}
		\tsCFLfilfun^2 
		&= 
		\frac{\hbeta^2}{\norm{\friedopsecondorder[m]}}, \label{eq:lfc-cfl-a} \\
		\tsCFLLFcoarse^2 
		&= 
		\tsCFLLFcoarse^2(\cflThetaLeapfrogCoarse) 
		=
		\frac{4\cAmin\cflThetaLeapfrogCoarse^2}{\norm{\friedopsecondorder[lf]}}, \label{eq:lfc-cfl-b}
	\end{align}		
	\end{subequations}
	and
	\begin{align}
		\tsCFLLFAll^2 
		= 
		\tsCFLLFAll^2(\cflThetaLeapfrogAll) = \frac{4\cflThetaLeapfrogAll^2}{\norm{\friedopsecondorder}}. \label{eq:leapfrog-cfl}
	\end{align}
\end{definition}
Recall that the \leapfrog scheme on the whole spatial domain is stable if $\tau \leq \tsCFLLFAll$.
Obviously, we aim at choosing $\filfun$ in such a way that the \CFL\ condition of the \lti method is significantly relaxed compared to the \leapfrog scheme.

For the dependence of the \CFL condition on the submeshes and the material parameters, note that
\begin{equation*}
	\norm{\friedopsecondorder[lf]}\lesssim\minmeshdiamCoarse[-2], 
	\qquad
	\norm{\friedopsecondorder[m]}\lesssim\minmeshdiamFine[-2] ,
\end{equation*}
with constants that depend on the largest eigenvalue of \(\material[first]^{-1}\friedcoeff{i}^{\mathrm{T}}\material[second]^{-1}\friedcoeff{i}\), \(i = 1,\dots,\dim\), on \(\meshCoarse\) and \(\meshFine\) respectively, \cf~\cite[Eq. 6]{HocK22} or~\cite[Lem. 7.32]{DiPE12}. Roughly speaking, the largest eigenvalues scale like the inverse of the product of the smallest eigenvalues of $\material[first]$ and $\material[second]$ on the respective submeshes. 
It is important to note that the eigenvalues of \(\friedopsecondorder[lf]\) and hence \(\tsCFLLFcoarse\) are independent of the material parameters and the diameters within the fine mesh \(\meshFine\), \cf~\cite{MFO2023,HocS16} for details.
\begin{lemma} \label{lts:Cor:A-op}
	For $\tau \leq \tsCFLfilfun$ defined~\eqref{eq:lfc-cfl-a}, the operator $\auxfilfunop$ induces a norm \(\norm{\cdot}[\auxfilfunop]\) such that
	for all $\solh[first]{} \in \dgspace[first]$ we have
    \begin{equation} \label{lts:eq:auxfilfun-norm}
    	        \cAuxfilfunmin \dgnorm[first]{\solh[first]{}}^2
    	\leq
        \norm{\solh[first]{}}[\auxfilfunop]^2 
            =
            \dgip[first]{\auxfilfunop \solh[first]{},\solh[first]{}} \,.
    \end{equation} 
\end{lemma}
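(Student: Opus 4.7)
My plan is to invoke finite-dimensional spectral functional calculus for the operator $-\tau^2\friedopsecondorder[m]$ on $\dgspace[first]$. I need two preparatory facts: self-adjointness and positive semi-definiteness of $-\tau^2\friedopsecondorder[m]$, and a strict positive lower bound on $\auxfilfun$ on an interval containing its spectrum.

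First I will show that $\friedopsecondorder[m] = \friedoph[second]\cutoffModified\friedoph[first]$ is self-adjoint and negative semi-definite on $(\dgspace[first], \dgip[first]{\cdot,\cdot})$. The adjointness relation~\eqref{eq:adjointness} gives $\friedoph[first]^{\ast} = -\friedoph[second]$ between $\dgspace[first]$ and $\dgspace[second]$ with their respective weighted inner products, and the cutoff $\cutoffModified$ in~\eqref{lts:eq:cutoff} is cellwise multiplication by a characteristic function, hence an orthogonal projection on $\dgspace[second]$ with respect to the $\material[second]$-weighted $\Ltwo$-inner product (the material weight and the cutoff both act cellwise and commute). Moving $\friedoph[second]$ across via adjointness and using this projection property yields
\begin{equation*}
    \dgip[first]{\friedopsecondorder[m] \solh[first]{}, \solh[first]{}}
    = -\dgip[second]{\cutoffModified \friedoph[first]\solh[first]{}, \friedoph[first]\solh[first]{}}
    = -\dgnorm[second]{\cutoffModified \friedoph[first]\solh[first]{}}^2 \leq 0,
\end{equation*}
and the analogous manipulation with two different arguments produces the symmetry $\dgip[first]{\friedopsecondorder[m] u, v} = \dgip[first]{u, \friedopsecondorder[m] v}$. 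Hence $-\tau^2\friedopsecondorder[m]$ is self-adjoint and positive semi-definite, with spectrum contained in $[0, \tau^2\norm{\friedopsecondorder[m]}]$.

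Next, the \CFL restriction $\tau^2 \leq \tsCFLfilfun^2 = \hbeta^2/\norm{\friedopsecondorder[m]}$ from~\eqref{eq:lfc-cfl-a} confines the spectrum of $-\tau^2\friedopsecondorder[m]$ to $[0,\hbeta^2]$. Definition~\ref{def:ctheta} then delivers $\auxfilfun(z)\geq\cAuxfilfunmin>0$ on that interval, so the spectral theorem yields the operator inequality $\auxfilfunop \geq \cAuxfilfunmin \Id[first]$. This immediately gives both the lower bound~\eqref{lts:eq:auxfilfun-norm} and the positive definiteness of $\auxfilfunop$ in the weighted inner product, so $\norm{\cdot}[\auxfilfunop]$ is a genuine norm.

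I expect no real obstacle: the argument collapses to routine spectral calculus once the self-adjointness and sign of the composite operator are in place. The only point requiring a moment of thought is the compatibility of $\cutoffModified$ with the weighted inner product, which follows because the cutoff and the material tensor both act cellwise.
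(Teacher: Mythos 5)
Your proposal is correct and follows essentially the same route as the paper: the paper's one-line proof also rests on bounding the spectrum of $-\tau^2\friedopsecondorder[m]$ by $\hbeta^2$ under the \CFL condition \eqref{eq:lfc-cfl-a} and then invoking \eqref{eq:ctheta} via spectral calculus. The self-adjointness and semi-definiteness facts you verify explicitly are stated (without proof) in the paper in the discussion surrounding the splitting \eqref{eq:genop-splitting-all}, so your write-up merely fills in details the paper leaves implicit.
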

\begin{proof}
	The statement follows immediately from~\eqref{eq:ctheta} since we can bound the eigenvalues $z$ of \(-\tau^2\friedopsecondorder[m]\) by \(\hbeta^2\) if \(\tau\leq\tsCFLfilfun\).
\end{proof}
\Cref{lts:Cor:A-op} implies that $\RA$ defined in~\eqref{eq:lts-thetafun} is also \selfadj and \pd for $\tau \leq \tsCFLfilfun$ and hence it induces a norm \(\norm{\cdot}[\RA]\) on $\dgspace$.
 
Using both \CFL conditions in~\eqref{eq:lfc-cfl} we can bound the action of the operators \(\genop[pm]\) defined in~\eqref{eq:lts-R-operators} from below.
\begin{lemma}\label{Cor:Rpm-lower-bound}
    Let $\cflThetaLeapfrogCoarse \in (0,1)$ and $\tau\leq\min\{\tsCFLLFcoarse(\cflThetaLeapfrogCoarse), \tsCFLfilfun\}$ satisfy the \CFL conditions~\eqref{eq:lfc-cfl}. Then we have
    \begin{equation}\label{eq:Rpm-lower-bound}
        \cAmin(1 - \cflThetaLeapfrogCoarse^2)\dgnorm[first]{\solh[first]{}}^2 + \dgnorm[second]{\solh[second]{}}^2 \leq \dgip{\genop[pm]\solh{}, \solh{}} .
    \end{equation}
	for all \(\solh{} = \pmat{\solh[first]{} & \solh[second]{}} \in \dgspace\).
\end{lemma}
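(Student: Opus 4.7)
The plan is to exploit the splitting \eqref{eq:genop-splitting} of $\genop[pm]$ and estimate each block separately, with the skew-adjointness of $\friedoph$ removing the sign-indefinite cross term. Writing
\begin{equation*}
    \dgip{\genop[pm]\solh{}, \solh{}}
    = \dgip{\RA \solh{}, \solh{}} \pm \tauh \dgip{\friedoph \solh{}, \solh{}} - \tausf \dgip{\perturbationop{lf} \solh{}, \solh{}},
\end{equation*}
the middle term vanishes because \eqref{eq:adjointness} makes $\friedoph$ skew-adjoint on $\dgspace$, so the sign ambiguity of $\genop[pm]$ plays no role here.

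For the diagonal part $\RA$, the block-diagonal structure in \eqref{eq:perturb-op} immediately gives
\begin{equation*}
    \dgip{\RA \solh{}, \solh{}}
    = \dgip[first]{\auxfilfunop \solh[first]{}, \solh[first]{}} + \dgnorm[second]{\solh[second]{}}^2,
\end{equation*}
and the first summand is bounded below by $\cAmin \dgnorm[first]{\solh[first]{}}^2$ by Lemma \ref{lts:Cor:A-op}, using the \CFL\ assumption $\tau \leq \tsCFLfilfun$ so that the spectrum of $-\ts^2\friedopsecondorder[m]$ lies in $[0,\hbeta^2]$ as required by Definition \ref{def:ctheta}.

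For the perturbation term, the block structure and positive semi-definiteness of $-\friedopsecondorder[lf]$ yield
\begin{equation*}
    0 \leq \dgip{\perturbationop{lf} \solh{}, \solh{}}
    = -\dgip[first]{\friedopsecondorder[lf] \solh[first]{}, \solh[first]{}}
    \leq \norm{\friedopsecondorder[lf]} \dgnorm[first]{\solh[first]{}}^2,
\end{equation*}
and the second \CFL\ assumption $\tau^2 \leq \tsCFLLFcoarse^2 = 4\cAmin \cflThetaLeapfrogCoarse^2 / \norm{\friedopsecondorder[lf]}$ converts the prefactor $\tausf$ into exactly $\cAmin \cflThetaLeapfrogCoarse^2$. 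Combining these three estimates gives the claimed bound
\begin{equation*}
    \cAmin(1 - \cflThetaLeapfrogCoarse^2) \dgnorm[first]{\solh[first]{}}^2 + \dgnorm[second]{\solh[second]{}}^2
    \leq \dgip{\genop[pm] \solh{}, \solh{}}.
\end{equation*}

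There is no real obstacle: the non-trivial part is recognizing that the coupling term $\pm \tauh \friedoph$ contributes nothing to the quadratic form thanks to skew-adjointness, so that the coercivity is entirely governed by the interplay between the lower bound on $\auxfilfunop$ (which requires $\tau \leq \tsCFLfilfun$ to keep the spectrum in the good range of Definition \ref{def:ctheta}) and the absorption of the $\perturbationop{lf}$-term (which requires $\tau \leq \tsCFLLFcoarse$ so that $\tausf \norm{\friedopsecondorder[lf]} \leq \cAmin \cflThetaLeapfrogCoarse^2$). Both CFL constraints are independent of $\minmeshdiamFine$ since $\friedopsecondorder[lf]$ only sees coarse cells via $\cutoffLeapfrog$, which is the key structural point one must keep in mind but does not require any additional argument in this lemma.
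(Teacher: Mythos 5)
Your proof is correct and follows essentially the same route as the paper: split $\genop[pm]$ via \eqref{eq:genop-splitting}, drop the skew-adjoint coupling term from the quadratic form, bound $\auxfilfunop$ from below by \Cref{lts:Cor:A-op} under $\tau\leq\tsCFLfilfun$, and absorb the $\perturbationop{lf}$-term using $\tau\leq\tsCFLLFcoarse$. The only difference is that you spell out the intermediate operator-norm bound on $-\friedopsecondorder[lf]$ and the vanishing of the $\pm\tauh\friedoph$ term, which the paper leaves implicit.
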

\begin{proof}
	Let \(\solh{} = \pmat{\solh[first]{} & \solh[second]{}} \in \dgspace\). 
    Using~\eqref{eq:perturb-op} and \eqref{eq:lfc-cfl-b} we obtain
	\begin{align*}
		\tausf\dgip{\perturbationop{lf}\solh{}, \solh{}}
		= 
		-\tausf\dgip{\friedopsecondorder[lf]\solh[first]{}, \solh[first]{}}
		\leq 
		\cAmin\cflThetaLeapfrogCoarse^2\dgnorm[first]{\solh[first]{}}^2,
	\end{align*}
	and thus with~\eqref{eq:genop-splitting} and~\eqref{lts:eq:auxfilfun-norm}
	\begin{align*}
		\dgip{\genop[pm]\solh{},\solh{}} 
		&= 
		\dgip{\auxfilfunop\solh[first]{}, \solh[first]{}} 
		+ 
		\dgnorm[second]{\solh[second]{}}^2
		- 
		\tausf\dgip{\perturbationop{lf}\solh{}, \solh{}} 
		 \\
		&\geq
		\cAmin(1 - \cflThetaLeapfrogCoarse^2)\dgnorm[first]{\solh[first]{}}^2 
		+ 
		\dgnorm[second]{\solh[second]{}}^2.
	\end{align*}
    This proves the statement.
\end{proof}

In the next \Cref{Lemma:Rpm-bound-lf-all} we show
that the CFL condition for the \lti scheme~\eqref{eq:lts-scheme} can not become stronger than that of the \leapfrog scheme used on the whole grid.
\begin{lemma}\label{Lemma:Rpm-bound-lf-all}
	Let  \(\cflThetaLeapfrogAll \in (0,\, 1)\), \(\solh{} = \pmat{\solh[first]{} & \solh[second]{}}\in\dgspace\), and $\tau \leq \tsCFLLFAll(\cflThetaLeapfrogAll)$ defined in~\eqref{eq:leapfrog-cfl}. Moreover, assume that \(0 < \filfun(z) \leq 1\) holds for all \(z\in[0,\, 4]\). Then we have 	
	\begin{equation}\label{eq:Rpm-bound-lf-all}
		\dgip{\genop[pm]\solh{},\solh{}}
		\geq 
		(1 - \cflThetaLeapfrogAll^2)\dgnorm[first]{\solh[first]{}}^2 
		+ 
		\dgnorm[second]{\solh[second]{}}^2.
	\end{equation}
\end{lemma}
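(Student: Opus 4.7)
The plan is to apply the splitting \eqref{eq:genop-splitting-all} of $\genop[pm]$ and handle each piece using self-adjointness/skew-adjointness properties together with the assumption $0 < \filfun(z) \leq 1$ on $[0,4]$. Since $\friedoph$ is skew-adjoint by~\eqref{eq:adjointness}, the cross term vanishes, $\dgip{\friedoph \solh{}, \solh{}} = 0$, and the identity
\begin{equation*}
    \dgip{\genop[pm]\solh{},\solh{}}
    = \dgip{\auxfilfunop \solh[first]{}, \solh[first]{}}
      + \dgnorm[second]{\solh[second]{}}^2
      + \tausf \dgip{\friedopsecondorder[lf] \solh[first]{}, \solh[first]{}}
\end{equation*}
holds, where I used $\perturbationop{lf} = \pmat{-\friedopsecondorder[lf] & 0 \\ 0 & 0}$.

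The key observation is that under the hypothesis $0 < \filfun(z) \leq 1$ on $[0,4]$, one has the pointwise scalar bound $\auxfilfun(z) = \filfun(z)^{-1} - z/4 \geq 1 - z/4$ on that interval. I would therefore verify that the spectrum of $-\ts^2 \friedopsecondorder[m]$ lies in $[0,4]$. Since $\friedopsecondorder[m]$ and $\friedopsecondorder[lf]$ are both negative semi-definite with sum $\friedopsecondorder$, we have $\norm{\friedopsecondorder[m]} \leq \norm{\friedopsecondorder}$, so the assumption $\tau \leq \tsCFLLFAll(\cflThetaLeapfrogAll)$ together with $\cflThetaLeapfrogAll \leq 1$ gives $\tau^2\norm{\friedopsecondorder[m]} \leq 4\cflThetaLeapfrogAll^2 \leq 4$. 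The spectral calculus then yields the operator inequality $\auxfilfunop \succeq \Id[first] + \tausf\friedopsecondorder[m]$, i.e.,
\begin{equation*}
    \dgip{\auxfilfunop \solh[first]{}, \solh[first]{}}
    \geq \dgnorm[first]{\solh[first]{}}^2 + \tausf \dgip{\friedopsecondorder[m] \solh[first]{}, \solh[first]{}}.
\end{equation*}

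Plugging this in and using $\friedopsecondorder[m] + \friedopsecondorder[lf] = \friedopsecondorder$, the two $\tausf$-terms merge into $\tausf \dgip{\friedopsecondorder \solh[first]{}, \solh[first]{}}$. Bounding this below by $-\tausf\norm{\friedopsecondorder}\dgnorm[first]{\solh[first]{}}^2$ and invoking the global \CFL condition $\tausf\norm{\friedopsecondorder} \leq \cflThetaLeapfrogAll^2$ from \eqref{eq:leapfrog-cfl} yields precisely \eqref{eq:Rpm-bound-lf-all}.

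The main obstacle I anticipate is cleanly justifying the operator inequality $\auxfilfunop \succeq \Id[first] + \tausf\friedopsecondorder[m]$ via functional calculus; this requires the spectral localization step above, which in turn relies on the (easy but essential) comparison $\norm{\friedopsecondorder[m]} \leq \norm{\friedopsecondorder}$ coming from negative semi-definiteness of both cutoff operators. Everything else is a concatenation of the skew-adjoint cancellation and the Rayleigh-type lower bound.
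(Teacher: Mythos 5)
Your proof is correct and follows essentially the same route as the paper: both arguments rest on the identity $\auxfilfun(z)=\filfun(z)^{-1}-z/4$, which lets the $\friedopsecondorder[m]$ contribution hidden in $\auxfilfunop$ recombine with $\tausf\friedopsecondorder[lf]$ into $\tausf\friedopsecondorder$, after which $\filfun^{-1}\geq 1$ and the global \CFL bound $\tausf\norm{\friedopsecondorder}\leq\cflThetaLeapfrogAll^2$ finish the estimate. Your explicit justification of the spectral localization of $-\tau^2\friedopsecondorder[m]$ in $[0,4]$ via $\norm{\friedopsecondorder[m]}\leq\norm{\friedopsecondorder}$ is a welcome detail that the paper's proof leaves implicit.
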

\begin{proof}
By assumption, $\filfunop$ is invertible since the \leapfrog \CFL condition~\eqref{eq:leapfrog-cfl} implies \(\dgnorm[first]{-\tau^2\friedopsecondorder }\leq 4\cflThetaLeapfrogAll^2 \leq 4\) and thus $\dgip{\filfunop^{-1}\solh[first]{}, \solh[first]{}} \geq \dgnorm[first]{\solh[first]{}}^2$.
Using \eqref{eq:auxfilfun} we can write
\begin{align*}
	\auxfilfunop + \tausf\friedopsecondorder[lf]
	& = \filfunop^{-1} + \tausf \bigl( \friedopsecondorder[m] + \friedopsecondorder[lf]\bigr) \\
    & = \filfunop^{-1} + \tausf  \friedopsecondorder.
\end{align*}
From \eqref{eq:lts-R-operators} and the \CFL condition \eqref{eq:leapfrog-cfl} we conclude 
	\begin{align*}
		\dgip{\genop[pm]\solh{},\solh{}}		
		&= 
		\dgip{(\auxfilfunop + \tausf\friedopsecondorder[lf])\solh[first]{},\solh[first]{}} + \dgnorm[second]{\solh[second]{}}^2 \\
		&\geq 
		(1 - \cflThetaLeapfrogAll^2)\dgnorm[first]{\solh[first]{}}^2 + \dgnorm[second]{\solh[second]{}}^2.
	\end{align*}
	This completes the proof.
\end{proof}
\Cref{Cor:Rpm-lower-bound} and the splitting of $\genop[pm]$ in~\eqref{eq:lts-R-operators} lead to the following properties.
\begin{lemma} \label{lts:lem:Rpm-properties}
	Let $\tau \leq \min\{\tsCFLfilfun, \tsCFLLFcoarse\}$ defined in~\eqref{eq:lfc-cfl} for some \(\cflThetaLeapfrogCoarse \in (0,\, 1)\).
	Then, for $\solh{}$, $\soltesth \in \dgspace$ and $\genop = \genop[m][inv] \genop[p]$ we have for $n=0,1,2,\ldots$ the identities
	\begin{subequations} \label{eq:lts-Rpm-properties}
		\begin{align}
			\dgip{\genop[m] \solh{},\soltesth} 
			&= \dgip{\solh{},\genop[p] \soltesth},
			\label{eq:lts-Rpm-properties-a}\\
			\dgip{\genop[m] \genop^n \solh{},\genop^n\solh{}} 
			&= \dgip{\genop[m] \solh{}, \solh{}},
			\label{eq:lts-Rpm-properties-b}\\
			\dgip{\genop[m] \genop^n \genop[m][inv] \solh{}, \genop^n \genop[m][inv]\solh{}} 
			&= \dgip{\genop[m][inv] \solh{},\solh{}},
			\label{eq:lts-Rpm-properties-bb}
		\end{align}
	\end{subequations}
	and in addition the bounds
	\begin{subequations}  \label{eq:lts-Rpm-bounds}
		\begin{align}  
		   \label{eq:lts-Rpm-bounds-a}
		   \dgnorm{\genop^n \solh{}} 
		   &\leq \LTSStabConst\norm{\solh{}}[\RA], \\
		   \label{eq:lts-Rpm-bounds-b}
		   \dgnorm{\genop^n \genop[m][inv] \solh{}} 
		   &\leq \LTSStabConst\dgnorm{\solh{}}, \\
		   \label{eq:lts-Rpm-inv-bounds}
		   \dgnorm{\genop[pm][inv] \solh{}}
		   &\leq \dgnorm{\solh{}},
		\end{align}
	\end{subequations}
	with \(\LTSStabConst = (\cAmin(1 - \cflThetaLeapfrogCoarse^2))^{-1/2}\).
\end{lemma}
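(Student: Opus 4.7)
The plan is to handle the five sub-statements in order, each building on the previous ones.

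\textbf{Identity~\eqref{eq:lts-Rpm-properties-a}.} This falls out directly from the splitting $\genop[pm] = \RA \pm \tauh\friedoph - \tausf\perturbationop{lf}$ of~\eqref{eq:genop-splitting}: since $\RA$ and $\perturbationop{lf}$ are self-adjoint with respect to $\dgip{\cdot,\cdot}$ while $\friedoph$ is skew-adjoint by~\eqref{eq:adjointness}, transferring $\genop[m]$ from one side of the inner product to the other flips only the sign of the skew part, turning it into $\genop[p]$.

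\textbf{Identities~\eqref{eq:lts-Rpm-properties-b} and~\eqref{eq:lts-Rpm-properties-bb}.} I would prove~(b) by induction on $n$. The inductive step uses $\genop[m]\genop = \genop[p]$ together with two applications of~(a); writing $w = \genop^{n-1}\solh{}$ one finds
\[
    \dgip{\genop[m]\genop^n\solh{},\genop^n\solh{}}
    = \dgip{\genop[p] w,\genop w}
    = \dgip{w,\genop[m]\genop w}
    = \dgip{w,\genop[p] w}
    = \dgip{\genop[m] w,w},
\]
where the symmetry of the real inner product enters in the middle step, and the induction hypothesis closes the argument. Identity~(bb) then follows by substituting $\solh{}\to\genop[m]^{-1}\solh{}$ into~(b) and once more exchanging the arguments of the inner product on the right-hand side.

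\textbf{Bounds~\eqref{eq:lts-Rpm-bounds-a} and~\eqref{eq:lts-Rpm-bounds-b}.} These result from combining the identities just established with \Cref{Cor:Rpm-lower-bound} on the left-hand side and the elementary upper estimate
\[
    \dgip{\genop[m] v,v} = \dgip{\RA v,v} - \tausf\dgip{\perturbationop{lf} v,v} \leq \norm{v}[\RA]^2,
\]
which holds because the skew-adjoint contribution vanishes in the quadratic form and $\perturbationop{lf}$ is positive semidefinite. Together with~(b) this gives~\eqref{eq:lts-Rpm-bounds-a} directly. For~\eqref{eq:lts-Rpm-bounds-b} the analogous chain applied to $\genop^n\genop[m]^{-1}\solh{}$ via~(bb) leaves $\dgip{\genop[m]^{-1}\solh{},\solh{}}$ on the right, which is controlled by $\dgnorm{\solh{}}^2$ through Cauchy--Schwarz combined with~\eqref{eq:lts-Rpm-inv-bounds}.

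\textbf{Bound~\eqref{eq:lts-Rpm-inv-bounds}.} This is the main obstacle. The plan is to prove the equivalent statement $\dgnorm{\genop[pm] v}\geq\dgnorm{v}$ by a direct computation. Using $\genop[m]^*=\genop[p]$ from~(a) one writes $\dgnorm{\genop[pm] v}^2 = \dgip{v,\genop[pm]^*\genop[pm] v}$; with $T = \RA - \tausf\perturbationop{lf}$ self-adjoint and $B = \tauh\friedoph$ skew-adjoint, the expansion $\genop[pm]^*\genop[pm] = T^2 \pm [T,B] - B^2$ yields
\[
    \dgnorm{\genop[pm] v}^2 = \dgnorm{T v}^2 \pm \dgip{v,[T,B] v} + \tausf\dgnorm{\friedoph v}^2.
\]
The key algebraic step exploits the identity $\auxfilfunop + \tausf\friedopsecondorder[lf] = \filfunop^{-1} + \tausf\friedopsecondorder$, which follows from~\eqref{eq:auxfilfun} together with $\friedopsecondorder = \friedopsecondorder[m] + \friedopsecondorder[lf]$, to rewrite the $(1,1)$-block of $T$ in a form directly comparable to the identity under the CFL conditions~\eqref{eq:lfc-cfl}. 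Isolating the precise non-negative combination---in particular, absorbing the self-adjoint commutator term against the manifestly positive quantity $\tausf\dgnorm{\friedoph v}^2$ using the block structure of $\friedoph$---is where the bulk of the technical work lies.
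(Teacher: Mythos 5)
Your proofs of the identities \eqref{eq:lts-Rpm-properties-a}--\eqref{eq:lts-Rpm-properties-bb} and of the bound \eqref{eq:lts-Rpm-bounds-a} coincide with the paper's: the same use of the splitting \eqref{eq:genop-splitting} for the adjointness, the same induction chain based on $\genop[m]\genop=\genop[p]$ and \eqref{eq:lts-Rpm-properties-a}, and the same combination of \Cref{Cor:Rpm-lower-bound} with \eqref{eq:lts-Rpm-properties-b} and the positive semidefiniteness of $\perturbationop{lf}$; your reduction of \eqref{eq:lts-Rpm-bounds-b} to \eqref{eq:lts-Rpm-inv-bounds} is also the paper's.

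The genuine gap sits in \eqref{eq:lts-Rpm-inv-bounds}, precisely at the step you flag as ``the bulk of the technical work'', and your plan cannot be completed: the inequality $\dgnorm{\genop[pm]v}\geq\dgnorm{v}$ you reformulate towards, equivalently an operator-norm contraction bound for $\genop[pm][inv]$, is false under the hypotheses of the lemma, so no absorption of the commutator $[T,B]$ into $\tausf\dgnorm{\friedoph v}^2$ can succeed. A two-dimensional instance: take $\filfun\equiv1$ (\leapfrog), one degree of freedom per field with $\friedoph[first]=\lambda$ and $\friedoph[second]=-\lambda$ (consistent with \eqref{eq:adjointness}), and all elements in $\meshModified$, so that $\cutoffLeapfrog=0$, $\friedopsecondorder[lf]=0$, $\tsCFLLFcoarse=\infty$, and $\tau\leq\tsCFLfilfun$ holds for $a=\tauh\lambda$ sufficiently small. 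Then
\begin{equation*}
	\genop[m]=\pmat{1-a^2 & a\\ -a & 1},
	\qquad
	\genop[m][inv]=\pmat{1 & -a\\ a & 1-a^2},
	\qquad
	\dgnorm{\genop[m][inv]e_1}^2=1+a^2>1 .
\end{equation*}
In your notation, $\genop[m]^{\ast}\genop[m]=T^2-[T,B]-B^2$ here has determinant $1$ and trace $2+a^4>2$, hence an eigenvalue strictly below $1$ for every $a>0$; the same holds for $\genop[p]$. So the estimate can only hold in the quadratic-form sense, and that is in fact exactly what the paper's proof establishes and what is consumed afterwards: in \eqref{eq:lts-Rpm-bounds-b} only $\dgip{\genop[m][inv]\solh{},\solh{}}\leq\dgnorm{\solh{}}^2$ enters, and the subsequent stability and error analysis use only \eqref{eq:lts-Rpm-bounds-a} and \eqref{eq:lts-Rpm-bounds-b}.

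The workable argument --- the paper's --- avoids $\genop[pm]^{\ast}\genop[pm]$ altogether. One verifies the explicit inverse \eqref{eq:lts-Rpmgeninv} (which also settles the invertibility of $\genop[pm]$ that your attempt tacitly assumes) and observes that its skew-adjoint off-diagonal part vanishes in the quadratic form, so that
\begin{equation*}
	\dgip{\genop[pm][inv]\solh{},\solh{}}
	=
	\dgip{\filfunop\solh[first]{},\solh[first]{}}
	+\dgnorm[second]{\solh[second]{}}^2
	+\tausf\dgip{\friedoph[first]\filfunop\friedoph[second]\solh[second]{},\solh[second]{}} .
\end{equation*}
Since $0<\filfun\leq1$ on $[0,\hbeta^2]$ for $\tau\leq\tsCFLfilfun$ by \Cref{def:ctheta}, the first term is at most $\dgnorm[first]{\solh[first]{}}^2$, and by \eqref{eq:adjointness} together with the positive definiteness of $\filfunop$ the last term equals $-\tausf\dgip{\filfunop\friedoph[second]\solh[second]{},\friedoph[second]\solh[second]{}}\leq0$, which yields $\dgip{\genop[pm][inv]\solh{},\solh{}}\leq\dgnorm{\solh{}}^2$. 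Accordingly, in your derivation of \eqref{eq:lts-Rpm-bounds-b} the step ``Cauchy--Schwarz combined with \eqref{eq:lts-Rpm-inv-bounds}'' should be replaced by inserting this quadratic-form bound directly; upgrading it to the operator-norm statement would require self-adjointness of $\genop[pm][inv]$, which its skew part precludes.
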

\begin{proof}
	The adjointness property~\eqref{eq:lts-Rpm-properties-a} follows from the skew-adjointness of $\friedoph$ and the self-adjointness of \(\friedoph[second]\cutoffLeapfrog\friedoph[first]\), while~\eqref{eq:lts-Rpm-properties-b} can be seen by induction with
	\begin{align*}
		\dgip{\genop[m]\genop\solh{}, \genop\solh{}} 
		&= \dgip{\genop[p]\solh{}, \genop\solh{}} \\
		&= \dgip{\solh{}, \genop[m]\genop\solh{}} \\
		&= \dgip{\solh{}, \genop[p]\solh{}} \\
		&= \dgip{\genop[m]\solh{}, \solh{}}.
	\end{align*}
	One can easily verify that the inverses are given by
	\begin{align} \label{eq:lts-Rpmgeninv}
		\genop[pm][inv]
		&=
		\begin{pmatrix}
			\filfunop & \quad \mp \tauh \filfunop\friedoph[second] \\
			\mp \tauh \friedoph[first]\filfunop & \quad \Id[second] + \tausf \friedoph[first]\filfunop\friedoph[second]
		\end{pmatrix} \nonumber \\
		&=
		\begin{pmatrix}
			\filfunop & \\
			& \quad \Id[second] + \tausf \friedoph[first]\filfunop\friedoph[second]
		\end{pmatrix}
		\mp 
		\tauh
		\begin{pmatrix}
			& \filfunop\friedoph[second] \\
			\friedoph[first]\filfunop & 
		\end{pmatrix}.
	\end{align}
	These are decompositions of \(\genop[pm][inv]\) into their symmetric and skew-symmetric parts.

	Replacing $\solh{}$ by $ \genop[m][inv] \solh{} $ in \eqref{eq:lts-Rpm-properties-b} proves \eqref{eq:lts-Rpm-properties-bb}.
	Furthermore, \eqref{eq:Rpm-lower-bound} and \eqref{eq:lts-Rpm-properties-b}
	imply
	\begin{align*}
		\cAmin(1 - \cflThetaLeapfrogCoarse^2)\dgnorm[first]{\genop^n \solh{}}^2
		&\leq
		\dgip{\genop[m] \genop^n\solh{}, \genop^n\solh{}} \\
		&= 
		\dgip{\genop[m] \solh{}, \solh{}} \\
		&=
		\norm{\solh{}}[\RA]^2 
		- 
		\dgip{\perturbationop{lf}\solh{}, \solh{}} 
		\leq \norm{\solh{}}[\RA]^2,
	\end{align*}
	where we have used the positive semi-definiteness of \(\perturbationop{lf}\) in the last estimate. This proves \eqref{eq:lts-Rpm-bounds-a}. 

	By~\eqref{eq:ctheta}, \(\filfunop\) is positive definite if the \CFL condition~\eqref{eq:lfc-cfl-a} is satisfied. Thus we have with the adjointness property~\eqref{eq:adjointness}
	\begin{equation*}
		\dgip{\friedoph[first]\filfunop\friedoph[second]\solh[second]{}, \solh[second]{}}
		=
		-\dgip{\filfunop\friedoph[second]\solh[second]{}, \friedoph[second]\solh[second]{}}
		\leq 0
	\end{equation*}
	which shows
	\begin{align*}
		\dgip{\genop[pm][inv]\solh{}, \solh{}}
		&=
		\dgip{\filfunop\solh[first]{}, \solh[first]{}}
		+
		\dgip{\solh[second]{}, \solh[second]{}}
		+
		\tausf\dgip{\friedoph[first]\filfunop\friedoph[second]\solh[second]{}, \solh[second]{}} \\
		&\leq
		\dgnorm{\solh{}}^2.
	\end{align*}
	and with this~\eqref{eq:lts-Rpm-inv-bounds}. Using~\eqref{eq:Rpm-lower-bound},~\eqref{eq:lts-Rpm-properties-bb} and~\eqref{eq:lts-Rpm-inv-bounds} shows 
	\begin{align*}
		\cAmin(1 - \cflThetaLeapfrogCoarse^2) \dgnorm{\genop^n \genop[m][inv] \solh{}}^2 
		&\leq \dgip{\genop[m] \genop^n \genop[m][inv]\solh{}, \genop^n\genop[m][inv]\solh{}} \\
		&= \dgip{\genop[m][inv] \solh{}, \solh{}} \\
		&\leq \dgnorm{\solh{}}^2
	\end{align*}
    and hence \eqref{eq:lts-Rpm-bounds-b}.
\end{proof}
With the previous Lemma~\ref{lts:lem:Rpm-properties} we conclude the following stability estimate.
\begin{lemma}[Stability] \label{lts:lem:stability}
	Let \(\solh{n+1}\) be defined in~\eqref{eq:lts-scheme-transf}. Under the same assumptions as in Lemma~$\ref{lts:lem:Rpm-properties}$ the numerical solution is bounded by
	\begin{equation} \label{lts:eq:stability-a}
		\dgnorm{\solh{n+1}} \leq 
		\LTSStabConst
		\Bigl(	
			\norm{\solh{0}}[\RA]
			+
			\tau \sum_{j=0}^{n}  \dgnorm{\grhshn{j+1/2}}
		\Bigr).
	\end{equation}
\end{lemma}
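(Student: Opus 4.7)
The plan is to iterate the recursion in \eqref{eq:lts-scheme-transf} and then apply the operator bounds from \Cref{lts:lem:Rpm-properties} term by term. Since $\genop[m]$ is invertible (as established in the proof of \Cref{lts:lem:Rpm-properties} via the explicit formula \eqref{eq:lts-Rpmgeninv}), I rewrite \eqref{eq:lts-scheme-transf} as
\begin{equation*}
    \solh{n+1} = \genop \solh{n} + \tau \genop[m][inv] \grhshn{n+1/2},
    \qquad \genop = \genop[m][inv]\genop[p],
\end{equation*}
and unroll this discrete variation-of-constants formula to obtain
\begin{equation*}
    \solh{n+1} = \genop^{n+1}\solh{0} + \tau \sum_{j=0}^{n} \genop^{n-j}\genop[m][inv]\grhshn{j+1/2}.
\end{equation*}

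Next, I apply the triangle inequality with respect to $\dgnorm{\cdot}$ and bound each summand separately. For the initial-value contribution I use \eqref{eq:lts-Rpm-bounds-a} with $\solh{} = \solh{0}$, which yields
$\dgnorm{\genop^{n+1}\solh{0}} \leq \LTSStabConst \norm{\solh{0}}[\RA]$.
For each forcing contribution I use \eqref{eq:lts-Rpm-bounds-b} with $\solh{} = \grhshn{j+1/2}$, giving
$\dgnorm{\genop^{n-j}\genop[m][inv]\grhshn{j+1/2}} \leq \LTSStabConst \dgnorm{\grhshn{j+1/2}}$.
Summing and factoring out $\LTSStabConst$ produces exactly \eqref{lts:eq:stability-a}.

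The reason this argument goes through cleanly is that \Cref{lts:lem:Rpm-properties} has already done the heavy lifting: the identity \eqref{eq:lts-Rpm-properties-b} provides the discrete energy conservation along iterates of $\genop$, which together with the lower bound \eqref{eq:Rpm-lower-bound} from \Cref{Cor:Rpm-lower-bound} converts $\dgip{\genop[m]\,\cdot,\cdot}$-control into $\dgnorm{\cdot}$-control with constant $\LTSStabConst$. The only subtlety to watch is the initial term, where it is crucial to measure $\solh{0}$ in the stronger $\norm{\cdot}[\RA]$-norm rather than in $\dgnorm{\cdot}$, because \eqref{eq:lts-Rpm-bounds-a} passes from the $\RA$-norm on the right, and using $\dgnorm{\solh{0}}$ instead would require an additional bound on $\RA$ from above that is not uniform in the filter-function parameters.

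There is no real obstacle here; the proof is essentially a one-line consequence of the machinery established in \Cref{lts:lem:Rpm-properties}. The main design choice was already made earlier: splitting $\genop[pm]$ into $\RA \pm \tauh\friedoph - \tausf\perturbationop{lf}$ in \eqref{eq:genop-splitting-all} so that the skew-adjoint part drops out of $\dgip{\genop[m]\,\cdot,\cdot}$, producing \eqref{eq:lts-Rpm-properties-b} and hence the uniform-in-$n$ constant $\LTSStabConst$.
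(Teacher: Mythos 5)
Your proposal is correct and coincides with the paper's own proof: both rewrite the scheme as $\solh{n+1} = \genop\solh{n} + \tau\genop[m][inv]\grhshn{n+1/2}$, unroll via the discrete variation-of-constants formula, and conclude with the triangle inequality together with the bounds \eqref{eq:lts-Rpm-bounds-a} and \eqref{eq:lts-Rpm-bounds-b}. The additional remarks on why the initial term must be measured in the $\norm{\cdot}[\RA]$-norm are accurate but not part of the argument itself.
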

\begin{proof}
	Since $\genop[m][inv]$ is invertible, the scheme~\eqref{eq:lts-scheme-transf} is equivalent to
	\begin{equation*}
		\solh{n+1} = \genop \solh{n}
		+
		\tau \genop[m][inv]\grhshn{n+1/2}, \qquad \genop = \genop[m][inv]\genop[p] .
	\end{equation*}
	Then, the discrete variation-of-constants formula yields
	\begin{equation}
		\solh{n+1} = \genop^{n+1} \solh{0}
		+
		\tau \sum_{j=0}^{n} \genop^{n-j}\genop[m][inv] \grhshn{j+1/2}.
	\end{equation}
	The claim now follows from \eqref{eq:lts-Rpm-bounds-a} and \eqref{eq:lts-Rpm-bounds-b} and the triangle inequality.
\end{proof}

\subsection{Error analysis of the local time-integration scheme}
In the following we denote the exact solution of~\eqref{eq:two-field-system} evaluated at time \(\tn{n}\) by
\begin{equation}\label{eq:exact-sol-short}
	\exsol{n} = \sol(\tn{n}), \qquad \sol = \pmat{\sol[first] & \sol[second]}.
\end{equation}
The error of the full discretization is given by
\begin{equation}\label{eq:lts-full-error}
	\err{n} = \exsol{n} - \solh{n} = \projerr{n} + \discerr{n},
	\qquad 
	\projerr{n} = \exsol{n} - \projLtwo\exsol{n}, 
	\qquad 
	\discerr{n} = \projLtwo\exsol{n} - \solh{n},
\end{equation}
where $\projerr{n}$ denotes the $\Ltwo$-projection error and \(\discerr{n}\) the discretization error respectively.

To define defects $\defect{n+1}$ we insert the $L^2$-projected exact solution into the numerical scheme \eqref{eq:lts-scheme-transf}. This yields
\begin{align}\label{eq:lts-exact-proj-scheme}
	\genop[m] \projLtwo \exsol{n+1} 
	= 
	\genop[p] \projLtwo \exsol{n}
	+ 
	\tau\grhshn{n+1/2}
	+ 
	\tau\defect{n+1} .
\end{align}
Subtracting~\eqref{eq:lts-scheme-transf} from~\eqref{eq:lts-exact-proj-scheme} yields the error recursion
\begin{equation} \label{eq:lts_error_recursion}
	\genop[m]\discerr{n+1} 
	= 
	\genop[p]\discerr{n}
	+
	\tau \defect{n+1}.
\end{equation}
This recursion is of the same form as \eqref{eq:lts-scheme-transf}. Hence, we can apply \Cref{lts:lem:stability} to bound the error. Unfortunately, it will turn out later, that we need a more careful inspection of the defects to deal with the cutoff functions within the \Friedrichs operators. Otherwise, this would lead to suboptimal error bounds.
\begin{theorem} \label{thm:lts-abstract_error_linear_full}
Let $\tau \leq \min\{\tsCFLfilfun, \tsCFLLFcoarse\}$ defined in~\eqref{eq:lfc-cfl} for some \(\cflThetaLeapfrogCoarse \in (0,\, 1)\). If one can decompose the defect into 
\begin{subequations} \label{eq:lts-abstract_error_linear_full}
	\begin{equation} \label{eq:lts-defectgen-split}
		\defect{j} = \defectsmooth{j} + \friedoph \defectfried{j} 
		\quad\text{with}\quad 
		\defectfried{j} = 
		\begin{pmatrix} 
			0 \\ 
			\defectfried[second]{j} 
		\end{pmatrix},
	\end{equation}
	then it holds
		\begin{align}\label{eq:lts-abstract_error_linear_full_v3}
			\dgnorm{\discerr{n+1}}
			&\leq \LTSStabConst
			\Bigl( 	
				\norm{\discerr{0}}[\RA]
				+
				\tau \sum_{j=1}^{n+1} \dgnorm{\defectsmooth{j}}
				+ 
				\dgnorm[second]{\defectfried[second]{1}}
				+
				\tau \sum_{j=2}^{n+1}  \dgnorm[second]{\dtau\defectfried[second]{j}}
			\Bigr) \nonumber \\
			&\qquad + \dgnorm[second]{ \defectfried[second]{n+1}}\ .
		\end{align}
	Here, 
	\begin{equation} \label{eq:discrete-time-derv-dt}
		\dtau\exsol[first]{n+1} 
		= \tfrac{1}{\tau}(\exsol[first]{n+1} - \exsol[first]{n}) 
	\end{equation}
	denotes the discrete time derivative of a function $\exsol[first]{}$.
\end{subequations}
\end{theorem}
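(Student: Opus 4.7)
The proof closely follows that of \Cref{lts:lem:stability}, but treats the two parts of the defect separately. Since the error recursion \eqref{eq:lts_error_recursion} has the same structure as \eqref{eq:lts-scheme-transf}, the discrete variation-of-constants formula gives
\begin{equation*}
  \discerr{n+1} = \genop^{n+1}\discerr{0} + \tau\sum_{j=0}^{n}\genop^{n-j}\genop[m][inv]\defect{j+1}.
\end{equation*}
Substituting the decomposition $\defect{j+1} = \defectsmooth{j+1} + \friedoph\defectfried{j+1}$ splits the sum into two contributions. The smooth part is bounded directly by \eqref{eq:lts-Rpm-bounds-b} in the form $\dgnorm{\genop^{n-j}\genop[m][inv]\defectsmooth{j+1}} \leq \LTSStabConst\,\dgnorm{\defectsmooth{j+1}}$, yielding after reindexing the contribution $\LTSStabConst\,\tau\sum_{j=1}^{n+1}\dgnorm{\defectsmooth{j}}$. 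The initial term is bounded by \eqref{eq:lts-Rpm-bounds-a}, giving $\dgnorm{\genop^{n+1}\discerr{0}} \leq \LTSStabConst\,\norm{\discerr{0}}[\RA]$.

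The crux is the Friedrichs part, which requires a discrete summation-by-parts to prevent the bound from picking up the mesh-dependent norm of $\friedoph$. The key algebraic identity is
\begin{equation*}
  \tau\friedoph = \genop[p] - \genop[m],
\end{equation*}
which can be read off directly from \eqref{eq:genop-splitting} and immediately implies $\tau\genop[m][inv]\friedoph = \genop - \Id$. Consequently,
\begin{equation*}
  \tau\sum_{j=0}^{n}\genop^{n-j}\genop[m][inv]\friedoph\defectfried{j+1}
  = \sum_{j=0}^{n}\bigl(\genop^{n-j+1} - \genop^{n-j}\bigr)\defectfried{j+1}.
\end{equation*}
Re-indexing one of the subsums and telescoping transforms the right-hand side into
\begin{equation*}
  \genop^{n+1}\defectfried{1} - \defectfried{n+1} + \tau\sum_{j=2}^{n+1}\genop^{n-j+2}\dtau\defectfried{j},
\end{equation*}
where \eqref{eq:discrete-time-derv-dt} has been used to rewrite consecutive differences as $\tau$ times a discrete time derivative. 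Morally, this trades one factor of the stiff operator $\friedoph$ for a single discrete time derivative of $\defectfried{j}$, which remains bounded under the regularity assumptions.

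It then suffices to take norms term by term. Because $\defectfried{j}$ has zero first component, the block-diagonal structure of $\RA$ yields $\norm{\defectfried{j}}[\RA] = \dgnorm[second]{\defectfried[second]{j}}$ and analogously for $\dtau\defectfried{j}$. Applying \eqref{eq:lts-Rpm-bounds-a} therefore bounds $\dgnorm{\genop^{n+1}\defectfried{1}}$ by $\LTSStabConst\,\dgnorm[second]{\defectfried[second]{1}}$ and each summand by $\LTSStabConst\,\dgnorm[second]{\dtau\defectfried[second]{j}}$, while the isolated term $-\defectfried{n+1}$ contributes $\dgnorm[second]{\defectfried[second]{n+1}}$ outside the factor $\LTSStabConst$. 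Collecting all contributions via the triangle inequality gives exactly \eqref{eq:lts-abstract_error_linear_full_v3}.

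The main obstacle is purely technical bookkeeping in the summation-by-parts step and making sure the operator identity $\tau\genop[m][inv]\friedoph = \genop - \Id$ is used correctly (which is legitimate because $\tau\friedoph$ and $\genop[p] - \genop[m]$ agree as operators on all of $\dgspace$). Without this telescoping manipulation the Friedrichs part of the defect would enter through $\dgnorm{\friedoph\defectfried{j}}$, costing a factor of $\minmeshdiamFine[-1]$ and destroying the mesh-size-independent nature of the estimate.
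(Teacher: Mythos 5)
Your proof is correct and follows essentially the same route as the paper's: the discrete variation-of-constants formula, the identity $\tau\genop[m][inv]\friedoph = \genop - \Id$ obtained from $\genop[p]-\genop[m]=\tau\friedoph$, summation by parts to trade $\friedoph$ for a discrete time derivative of $\defectfried{}$, and the bounds \eqref{eq:lts-Rpm-bounds-a}--\eqref{eq:lts-Rpm-bounds-b} combined with the observation that the vanishing first component gives $\norm{\defectfried{j}}[\RA]=\dgnorm[second]{\defectfried[second]{j}}$. The index bookkeeping in your telescoped sum matches the paper's after reindexing, so nothing further is needed.
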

\begin{proof}
	Solving the error  recursion \eqref{eq:lts_error_recursion} with the discrete variation-of-constants formula yields
	\begin{equation}  \label{eq:discerr-dvoc}
		\begin{aligned}
		\discerr{n+1} 
		& = \genop^{n+1} \discerr{0}
		+
		\tau \sum_{j=0}^{n} \genop^{n-j}\genop[m][inv] \defect{j+1} \\
		&= \genop^{n+1} \discerr{0}
		+
		\tau \sum_{j=0}^{n} \genop^{n-j}\genop[m][inv] \defectsmooth{j+1}
		+
		\sum_{j=0}^{n} \genop^{n-j} (\genop - \Id) \defectfried{j+1},
		\end{aligned}
	\end{equation}
	since 
	\begin{equation*}
		\tau \genop[m][inv] \friedoph \defectfried{j+1} 
		= \genop[m][inv](\genop[p] - \genop[m]) \defectfried{j+1}
		= (\genop - \Id) \defectfried{j+1}.
	\end{equation*}
	Now we use the well-known summation by parts formula: for suitable sequences $\{\rho_j\}_j$ and $\{\delta_j\}_j$ we have
	\begin{equation}\label{lts:eq:sum_by_parts}
		\sum_{j=0}^{n} \rho_{n-j} \delta_{j+1} 
		=
		r_{n} \delta_1  
		+
		\sum_{j=1}^{n} r_{n-j} ( \delta_{j+1} - \delta_j) ,
		\qquad r_k = \sum_{j=0}^k \rho_j.
	\end{equation}
	For $\rho_k=\genop^k(\genop-\Id)$, this yields $r_k = \genop^{k+1}- \Id$ and 
	\begin{equation*}
		\begin{aligned}
		\sum_{j=0}^{n} \genop^{n-j} (\genop - \Id) \defectfried{j+1} %
		&= 
		\genop^{n+1} \defectfried{1} - \defectfried{n+1}
			+ \tau \sum_{j=1}^{n} \genop^{n-j+1} \dtau \defectfried{j+1}.
		\end{aligned}
	\end{equation*}
	The bounds \eqref{eq:lts-Rpm-bounds-a} and \eqref{eq:lts-Rpm-bounds-b} imply \eqref{eq:lts-abstract_error_linear_full_v3}, since the first component of the defect \(\defectfried{}\) vanishes.
\end{proof}
For the discrete time-derivative \eqref{eq:discrete-time-derv-dt} of a sufficiently smooth function  \(\sol[first]\), a simple calculation shows the representations
\begin{equation}
\label{eq:discrete-to-contin-time-derv-all}  
		\dtau\exsol[first]{n+1} 
		= \int_0^1\derv{t}\sol[first](\tn{n} + \tau s)\d{s}
                , \qquad
                		\dtau^2\exsol[first]{n+1} 
		=\int_{-1}^{1}(1 - \abs{s})\derv[2]{t}\sol[first](\tn{n} + \tau s)\d{s}.
	\end{equation}
Note that the avarage $\derv{t}\exsol[first]{n+1/2}$ corresponds to the trapezoidal rule applied to the first integral in \eqref{eq:discrete-to-contin-time-derv-all}. It is well-known that the error 
\begin{subequations}\label{lts:eq:trapez-rule}
	\begin{equation}
		\defectTrapez[first]{n+1} =	\dtau\exsol[first]{n+1} 
		- 
		\derv{t}\exsol[first]{n+1/2} 
	\end{equation}
	satisfies
	\begin{equation}\label{lts:eq:trapez-defect-bound}
		\dgnorm[first]{\defectTrapez[first]{n+1}} 
		\leq 
		\frac{\tau^2}{8}\int_{0}^{1}\dgnorm[first]{\derv[3]{t}\sol[first](\tn{n} + \tau s)}\d{s} \ .
	\end{equation}
\end{subequations}
Moreover, we define 
\begin{subequations} \label{eq:auxfilfuncardi-all}
	\begin{equation} \label{eq:auxfilfuncardi}
		\auxfilfuncardiop = \auxfilfuncardiop*[m], 
		\qquad
		\text{with}
		\qquad
		\auxfilfuncardi(z) = \frac{\auxfilfun(z)-1}{z}, \qquad z>0,
	\end{equation}
	and set $ \auxfilfuncardi(0)=\auxfilfun'(0)$. In addition, we will use
	\begin{equation}
		\widetilde{\auxfilfuncardiop } = \auxfilfuncardi(-\tau^2\cutoffModified\friedoph[first]\friedoph[second]\cutoffModified).
	\end{equation}
\end{subequations}
Note that the nonzero eigenvalues of
\(\cutoffModified\friedoph[first]\friedoph[second]\cutoffModified\) and \(\friedoph[second]\cutoffModified\friedoph[first]\) coincide.

Furthermore, we recall the consistency and approximation properties of the \dG discretized \Friedrichs operators \(\friedoph[dummy]\),
with \(\sol[dummy] \in \{\sol[first], \sol[second]\}\),
\begin{subequations}\label{eq:dg-op-props}
	\begin{align}
		\friedoph[dummy]\sol[dummy] 
		&= 
		\projLtwo\friedop[dummy]\sol[dummy]
		&& 
		\text{for all }\sol[dummy]\in \frieddom[dummy]\cap\brokensobolev[dummy]{1}, \label{eq:consistency} \\
		\norm{\friedoph[dummy]\projerr[dummy]{}}[\domain]
		&\leq 		
		\Capprox[dummy]\seminormHkTh{\dGdegree+1}{\meshdiam[\dGdegree]\sol[dummy]}
		&& 
		\text{for all }\sol[dummy]\in\frieddom[dummy]\cap\brokensobolev[dummy]{\dGdegree+1},\label{eq:approx_prop}
	\end{align}
\end{subequations}
cf.~\cite{DiPE12, MFO2023}.

To derive a representation of the defect \(\defect{j}\) defined in~\eqref{eq:lts-exact-proj-scheme}, we write \eqref{eq:genop-splitting-all} as a perturbation of the \CN scheme.
\begin{lemma}\label{Lem:CN-split}
	The operators \(\genop[pm]\) defined in \eqref{eq:genop-splitting-all} satisfy
	\begin{subequations}\label{eq:perturbation-of-CN}
		\begin{equation}\label{eq:perturbation-of-CN-a}
			\genop[pm] = \genop[pm][CN] + \friedoph\perturbationop{LTI},
		\end{equation}
		where we have 
		\begin{equation}\label{eq:perturbation-of-CN-b}
			\genop[pm][CN] = \Id \pm \tauh\friedoph 
			\quad\text{and}\quad 
			\perturbationop{LTI} = \ts^2
			\begin{pmatrix}
				0 & 0 \\
				0 & \frac14\cutoffLeapfrog\Id[second] - \widetilde{\auxfilfuncardiop}\cutoffModified
			\end{pmatrix}
			\friedoph .
		\end{equation}
	\end{subequations}
\end{lemma}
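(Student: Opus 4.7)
The plan is a direct computation: I expand both sides in the two-field block structure and reduce the claim to an algebraic identity for $\auxfilfunop-\Id[first]$ using the scalar relation $\auxfilfun(z)=1+z\auxfilfuncardi(z)$ from \eqref{eq:auxfilfuncardi-all}.

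First I would insert the splitting \eqref{eq:genop-splitting-all} into \eqref{eq:lts-R-operators} and write
\[
\genop[pm]
=\pmat{\auxfilfunop+\tausf\friedoph[second]\cutoffLeapfrog\friedoph[first] & \pm\tauh\friedoph[second]\\ \pm\tauh\friedoph[first] & \Id[second]},
\qquad
\genop[pm][CN]=\pmat{\Id[first] & \pm\tauh\friedoph[second]\\ \pm\tauh\friedoph[first] & \Id[second]},
\]
so that $\genop[pm]-\genop[pm][CN]$ is the block matrix with the single nonzero entry $\auxfilfunop-\Id[first]+\tausf\friedoph[second]\cutoffLeapfrog\friedoph[first]$ in the $(1,1)$ slot. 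A routine block-matrix multiplication of $\friedoph\perturbationop{LTI}$ with \eqref{eq:perturbation-of-CN-b} and $\friedoph=\begin{pmatrix}0 & \friedoph[second]\\ \friedoph[first] & 0\end{pmatrix}$ shows that $\friedoph\perturbationop{LTI}$ is also supported only in the $(1,1)$ block, where it equals $\tausf\friedoph[second]\cutoffLeapfrog\friedoph[first]-\tau^2\friedoph[second]\widetilde{\auxfilfuncardiop}\cutoffModified\friedoph[first]$. Hence the lemma reduces to the scalar operator identity
\[
\auxfilfunop-\Id[first] \;=\; -\tau^2\,\friedoph[second]\,\widetilde{\auxfilfuncardiop}\,\cutoffModified\friedoph[first].
\]

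The main step is to verify this last identity. From \eqref{eq:auxfilfuncardi} we have $\auxfilfun(z)-1=z\,\auxfilfuncardi(z)$, so applied via functional calculus to $-\tau^2\friedopsecondorder[m]=-\tau^2\friedoph[second]\cutoffModified\friedoph[first]$ this yields
\[
\auxfilfunop-\Id[first]=-\tau^2\friedopsecondorder[m]\,\auxfilfuncardiop .
\]
To introduce $\widetilde{\auxfilfuncardiop}$, I would use the well-known swap rule for polynomials (and hence holomorphic functional calculus) $C\,p(BC)=p(CB)\,C$ with $B=\friedoph[second]\cutoffModified$ and $C=\cutoffModified\friedoph[first]$. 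The idempotency $\cutoffModified^2=\cutoffModified$ yields $BC=\friedopsecondorder[m]$ and $CB=\cutoffModified\friedoph[first]\friedoph[second]\cutoffModified$, so
\[
\friedopsecondorder[m]\,\auxfilfuncardiop \;=\; B\,C\,\auxfilfuncardi(-\tau^2BC)\;=\;B\,\auxfilfuncardi(-\tau^2CB)\,C \;=\; \friedoph[second]\cutoffModified\,\widetilde{\auxfilfuncardiop}\,\cutoffModified\friedoph[first].
\]
Finally, since every polynomial in $CB=\cutoffModified\friedoph[first]\friedoph[second]\cutoffModified$ without its constant term already carries $\cutoffModified$ on both sides, the projector $\cutoffModified$ commutes with $\widetilde{\auxfilfuncardiop}$; using $\cutoffModified^2=\cutoffModified$ once more collapses the extra factor and gives $\friedoph[second]\cutoffModified\widetilde{\auxfilfuncardiop}\cutoffModified\friedoph[first]=\friedoph[second]\widetilde{\auxfilfuncardiop}\cutoffModified\friedoph[first]$, completing the identity.

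The only nontrivial point is the last paragraph: the swap rule for the functional calculus combined with the idempotency of the cutoff. Everything else is block-matrix bookkeeping. I would therefore state the swap rule as an intermediate observation, then assemble the final identity and match it with the explicit form of $\friedoph\perturbationop{LTI}$ above.
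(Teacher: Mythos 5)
Your proof is correct and takes essentially the same route as the paper: the scalar identity $\auxfilfun(z)-1=z\auxfilfuncardi(z)$ from \eqref{eq:auxfilfuncardi}, the idempotency $\cutoffModified=\cutoffModified^2$, and the swap rule $Cf(BC)=f(CB)C$ for the functional calculus, which is exactly what the paper invokes via \cite[Cor.~1.34]{Hig08}. You are in fact slightly more explicit than the paper, which stops at $\auxfilfunop=\Id[first]-\ts^2\friedoph[second]\cutoffModified\widetilde{\auxfilfuncardiop}\cutoffModified\friedoph[first]$ and leaves both the block-matrix bookkeeping and the absorption of the left factor $\cutoffModified$ into $\widetilde{\auxfilfuncardiop}\cutoffModified$ (needed to match the form of $\perturbationop{LTI}$ in \eqref{eq:perturbation-of-CN-b}) implicit.
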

\begin{proof}
	The resprentation follows from the definiton of \(\auxfilfuncardi\) in \eqref{eq:auxfilfuncardi} and \(\cutoffModified = \cutoffModified^2\) since
	\begin{align*}
		\auxfilfunop[m]
		=
		\Id[first] -\ts^2\friedoph[second]\cutoffModified\friedoph[first]\auxfilfuncardiop[m]
		= 
		\Id[first] -\ts^2\friedoph[second]\cutoffModified\widetilde{\auxfilfuncardiop}\cutoffModified\friedoph[first],
	\end{align*}
	where the second equality follows from~\cite[Cor. 1.34]{Hig08}.
\end{proof}
With \Cref{Lem:CN-split}, we can split the defect \(\defect{j}\) defined in~\eqref{eq:lts-exact-proj-scheme} as 
\begin{subequations}\label{eq:defect-split-CN-LTI}
	\begin{equation}\label{eq:defect-split-CN-LTI-a}
		\defect{j} 
		=
		\defect[CN]{j} + \friedoph\defect[LTI]{j}
	\end{equation}
	with
	\begin{align}
		\defect[CN]{j}
		&= 
		\projLtwo\dtau\exsol{j} - \friedoph\projLtwo\exsol{j-1/2} - \grhshn{j-1/2}, \label{eq:defect-split-CN} \\
		\defect[LTI]{j}
		&= 
		\perturbationop{LTI}\projLtwo\dtau\exsol{j} \nonumber \\
		&=
		\perturbationop{LTI}\dtau\exsol{j} - \perturbationop{LTI}\dtau\projerr{j} . \label{eq:defect-split-LTI}
	\end{align}
\end{subequations}
The first component of \(\defect[LTI]{j}\) vanishes such that we can apply~\Cref{thm:lts-abstract_error_linear_full} later. 

We now have to bound the defect~\eqref{eq:defect-split-CN-LTI-a}. Following~\cite[Lemma 12.2]{MFO2023} the defect~\eqref{eq:defect-split-CN} stemming from the \CN scheme can be bounded under appropriate regularity assumptions on the exact solution of \eqref{eq:two-field-system} as 
\begin{equation}\label{eq:CN-defect-estimate}
	\dgnorm{\defectCN{j}}
	\leq
	\Capprox\seminormHkTh{\dGdegree+1}{\meshdiam[\dGdegree]\exsol[first]{j-1/2}} 
	+
	\tfrac{\tau^2}{8}\int_{0}^{1}\dgnorm[second]{\derv[3]{t}\sol[second](\tn{j-1} + \tau s)}\d{s}.
\end{equation}
To bound the defect~\eqref{eq:defect-split-LTI} we need a bound on \(\auxfilfuncardi\) introduced in~\eqref{eq:auxfilfuncardi}.
\begin{definition} \label{def:cphi}
	With $\hbeta$ from \Cref{def:ctheta}, we define $\cAuxfilfuncardi$ as the smallest constant such that for $\auxfilfuncardi $ defined in \eqref{eq:auxfilfuncardi} it holds
	\begin{equation}\label{eq:auxfilfuncardi-bound}
		\abs{\auxfilfuncardi(z)} \le \cAuxfilfuncardi
		\qquad \text{for all }
		z \in [0,\hbeta^2] \cap \real.
	\end{equation}
\end{definition}
Note that such a bound exists since \(\auxfilfuncardi\) is continuous on \([0,\hbeta^2]\) and hence bounded.

Now we are able to state a bound for the defect~\eqref{eq:defect-split-LTI} under appropriate regularity assumptions on the exact solution of \eqref{eq:two-field-system}.
\begin{lemma}\label{Lem:lts-defect-bounds}
	Let \(\sol = \begin{pmatrix} \sol[first] & \sol[second] \end{pmatrix}\) with
	\begin{equation*}
		\sol \in C(\tn{0}, T; D(\friedop) \cap \brokensobolev{\dGdegree+1}) 
				\cap C^3(\tn{0}, T; \Ltwovec{\domain})
	\end{equation*}
	be the solution of~\eqref{eq:two-field-system} and \(\tau \leq \tsCFLfilfun\) defined in \eqref{def:lts-CFL}. We further assume
	\begin{equation*}
		\grhs[second]\in C^2(\tn{0}, T; \Ltwovec[second]{\domain}) .
	\end{equation*}
	Then the defect~\eqref{eq:defect-split-LTI} is bounded by
	\begin{subequations}
	\begin{align}
		\dgnorm[second]{\defect[LTI]{n+1}} 
		&\leq 
		\tau^2 \cAuxfilfuncardimax 
		\Bigl(
			\Capprox\seminormHkTh{\dGdegree+1}{\meshdiam[\dGdegree]\dtau\exsol[first]{n+1}} 
			+ 
			\int_0^1\dgnorm[second]{\derv[2]{t}\sol[second](\tn{n} + \tau s)}\d{s} \label{lts:eq:friedDefect-bound}  \\
			&\phantom{
				\leq\tau^2 \cAuxfilfuncardimax 
				\Big(
				\Capprox\seminormHkTh{\dGdegree+1}{\meshdiam[k]\dtau\exsol[first]{n+1}} 
			}
			+ 
			\int_0^1\dgnorm[second]{\derv{t}\grhs[second](\tn{n} + \tau s)}\d{s}
		\Bigr) \nonumber \\
		\dgnorm[second]{\dtau\defect[LTI]{n+1}} 
		&\leq 
		\tau^2 \cAuxfilfuncardimax 
		\Bigl(
			\Capprox\seminormHkTh{\dGdegree+1}{\meshdiam[k]\dtau^2\exsol[first]{n+1}} 
			+ 
			\int_{-1}^{1} \dgnorm[second]{\derv[3]{t}\sol[second](\tn{n} + \tau s)}\d{s} \label{lts:eq:dtau-friedDefect-bound} \\
			&\phantom{
				\leq\tau^2 \cAuxfilfuncardimax 
				\Big(\Capprox\seminormHkTh{\dGdegree+1}{\meshdiam[\dGdegree]\dtau^2\exsol[first]{n+1}}
			}
			+ 
			\int_{-1}^{1} \dgnorm[second]{\derv[2]{t}\grhs[second](\tn{n} + \tau s)}\d{s}
		\Bigr) \nonumber
	\end{align}
	\end{subequations}
	with $\cAuxfilfuncardimax = \sqrt{2}\max\{\tfrac14, \cAuxfilfuncardi\}$.
\end{lemma}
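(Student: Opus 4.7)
The plan is to exploit the block structure of $\perturbationop{LTI}$ from \Cref{Lem:CN-split}. Since that operator has vanishing first row, both $\defect[LTI]{n+1} = \perturbationop{LTI}\dtau\exsol{n+1} - \perturbationop{LTI}\dtau\projerr{n+1}$ and its discrete time difference reduce to their second components, each of the form $\ts^2 Q \friedoph[first] v$ with $Q = \tfrac14\cutoffLeapfrog\Id[second] - \widetilde{\auxfilfuncardiop}\cutoffModified$ and $v$ equal to $\dtau\exsol[first]{n+1}$, $\dtau\projerr[first]{n+1}$, or their $\dtau$-differences. The first task is therefore to bound $Q$ uniformly. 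The cutoffs $\cutoffLeapfrog, \cutoffModified$ are $L^2$-orthogonal projections of norm one; the nonzero eigenvalues of $-\tau^2 \cutoffModified\friedoph[first]\friedoph[second]\cutoffModified$ coincide with those of $-\tau^2 \friedopsecondorder[m]$ (as noted just after \eqref{eq:auxfilfuncardi-all}), so under the \CFL condition $\tau \leq \tsCFLfilfun$ this spectrum is contained in $[0,\hbeta^2]$. \Cref{def:cphi} then gives $\abs{\auxfilfuncardi(z)} \leq \cAuxfilfuncardi$ on that interval, hence by spectral calculus $\dgnorm[second]{\widetilde{\auxfilfuncardiop}\cutoffModified v} \leq \cAuxfilfuncardi \dgnorm[second]{v}$. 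Combining with $(a+b)^2 \leq 2(a^2+b^2)$ yields $\dgnorm[second]{Q v} \leq \sqrt 2 \max\{\tfrac14,\cAuxfilfuncardi\}\dgnorm[second]{v} = \cAuxfilfuncardimax \dgnorm[second]{v}$.

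For the exact-solution contribution $\ts^2 Q \friedoph[first]\dtau\exsol[first]{n+1}$, I apply the consistency property \eqref{eq:consistency} to write $\friedoph[first]\dtau\exsol[first]{n+1} = \projLtwo\friedop[first]\dtau\exsol[first]{n+1}$, use the first integral representation in \eqref{eq:discrete-to-contin-time-derv-all} together with commutation of $\friedop[first]$ with the time integral to obtain $\int_0^1 \friedop[first]\derv{t}\sol[first](\tn{n}+\tau s)\d{s}$, and differentiate the second PDE in \eqref{eq:two-field-system} once in time to substitute $\friedop[first]\derv{t}\sol[first] = \derv[2]{t}\sol[second] - \derv{t}\grhs[second]$. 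Contractivity of $\projLtwo$ and the triangle inequality then produce the two time-integral terms in \eqref{lts:eq:friedDefect-bound}. For the projection-error contribution $\ts^2 Q\friedoph[first]\dtau\projerr[first]{n+1}$, the approximation property \eqref{eq:approx_prop} applied to $\dtau\exsol[first]{n+1}$ delivers the remaining $\Capprox$ term directly, giving \eqref{lts:eq:friedDefect-bound}.

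The bound \eqref{lts:eq:dtau-friedDefect-bound} for $\dtau\defect[LTI]{n+1}$ follows by the identical two-step recipe, now with $\dtau^2\exsol[first]{n+1}$ and $\dtau^2\projerr[first]{n+1}$ in place of the first discrete derivatives: use the second representation in \eqref{eq:discrete-to-contin-time-derv-all}, estimate the weight $1-\abs{s} \leq 1$ on $[-1,1]$, and differentiate the second PDE in \eqref{eq:two-field-system} twice in time to rewrite $\friedop[first]\derv[2]{t}\sol[first] = \derv[3]{t}\sol[second] - \derv[2]{t}\grhs[second]$. The main obstacle is the operator-norm bound on $Q$: without the spectral equivalence between the argument of $\widetilde{\auxfilfuncardiop}$ and $-\tau^2\friedopsecondorder[m]$, the \CFL condition $\tau \leq \tsCFLfilfun$ would not translate into control of $\widetilde{\auxfilfuncardiop}$ via \Cref{def:cphi}; once this is secured, everything else is a routine chaining of consistency, approximation, and the PDE to trade spatial derivatives for temporal ones.
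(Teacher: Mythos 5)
Your proposal is correct and follows essentially the same route as the paper: exploit the block structure of $\perturbationop{LTI}$, bound the two cutoff pieces $\tfrac14\cutoffLeapfrog$ and $\widetilde{\auxfilfuncardiop}\cutoffModified$ separately (using the spectral coincidence with $-\ts^2\friedopsecondorder[m]$ and \Cref{def:cphi}, and the disjoint supports of the cutoffs for the $\sqrt{2}\max$ constant), then split off the projection error via \eqref{eq:approx_prop} and convert the exact-solution part into time integrals via \eqref{eq:consistency}, the PDE \eqref{eq:two-field-system}, and \eqref{eq:discrete-to-contin-time-derv-all}. This matches the paper's proof step for step.
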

\begin{proof}
	With~\eqref{eq:consistency},~\eqref{eq:approx_prop},~\eqref{eq:auxfilfuncardi-bound}, and~\eqref{eq:two-field-system} we get
	\begin{align*}
		\dgnorm[second]{\defect[LTI]{n+1}} 
		&\leq
		\tausf\dgnorm[second]{\cutoffLeapfrog\projLtwo\dtau\derv[]{t}\exsol[second]{n+1}} 
		+
		\tau^2\cAuxfilfuncardi\dgnorm[second]{\cutoffModified\projLtwo\dtau\derv[]{t}\exsol[second]{n+1}} \\
		&\quad
		+
		\tausf\dgnorm[second]{\cutoffLeapfrog\projLtwo\dtau\grhs[second]^{n+1}}
		+
		\tau^2\cAuxfilfuncardi\dgnorm[second]{\cutoffModified\projLtwo\dtau\grhs[second]^{n+1}} \\
		&\quad+
		\tau^2\cAuxfilfuncardi\Capprox\seminormHkTh{\dGdegree+1}{\cutoffModified\meshdiam[\dGdegree]\dtau\exsol[first]{n+1}} 
		+
		\tausf\Capprox\seminormHkTh{\dGdegree+1}{\cutoffLeapfrog\meshdiam[\dGdegree]\dtau\exsol[first]{n+1}}
	\end{align*}
	and in the same manner
	\begin{align*}
		\dgnorm[second]{\dtau\defect[LTI]{n+1}} 
		&\leq
		\tausf\dgnorm[second]{\cutoffLeapfrog\projLtwo\dtau^2\derv[]{t}\exsol[second]{n+1}} 
		+
		\tau^2\cAuxfilfuncardi\dgnorm[second]{\cutoffModified\projLtwo\dtau^2\derv[]{t}\exsol[second]{n+1}} \\
		&\quad
		+
		\tausf\dgnorm[second]{\cutoffLeapfrog\projLtwo\dtau^2\grhs[second]^{n+1}}
		+
		\tau^2\cAuxfilfuncardi\dgnorm[second]{\cutoffModified\projLtwo\dtau^2\grhs[second]^{n+1}} \\
		&\quad +
		\tau^2\cAuxfilfuncardi\Capprox\seminormHkTh{\dGdegree+1}{\cutoffModified\meshdiam[\dGdegree]\dtau^2\exsol[first]{n+1}}
		+
		\tausf\Capprox\seminormHkTh{\dGdegree+1}{\cutoffLeapfrog\meshdiam[\dGdegree]\dtau^2\exsol[first]{n+1}} \ .
	\end{align*}
	Using~\eqref{eq:discrete-to-contin-time-derv-all} %
        completes the proof.
\end{proof}
Now we state our main theorem which yields under certain regularity assumptions convergence of order two in time and order \(\dGdegree\) in space if we choose as \dG polynomial degree \(\dGdegree\).
\begin{theorem}\label{Thm:error-estimate}
	By the same regularity assumptions as in Lemma~$\ref{Lem:lts-defect-bounds}$ together with \(\ts \leq \min\{\tsCFLLFcoarse, \tsCFLfilfun\}\) and \(\solh{0} = \projLtwo\sol(\tn{0})\), the error of the full discretization satisfies 
	\begin{align}
		\dgnorm{\sol(\tn{n+1}) - \solh{n+1}} 
		&\leq C(\tau^2 + \maxmeshdiam[\dGdegree])
	\end{align}
	with a constant \(C\) independent of \(\tau\) and \(\meshdiam\).
\end{theorem}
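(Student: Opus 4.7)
My strategy is to combine the defect decomposition~\eqref{eq:defect-split-CN-LTI-a} with the abstract error estimate of \Cref{thm:lts-abstract_error_linear_full}. I start from the splitting $\err{n+1}=\projerr{n+1}+\discerr{n+1}$ in~\eqref{eq:lts-full-error}. The projection error is handled directly by a standard $\Ltwo$-projection estimate, which contributes a term of size $\maxmeshdiam[\dGdegree+1]$, absorbed into $\maxmeshdiam[\dGdegree]$ by the constant. The work lies in the discretization error $\discerr{n+1}$, which satisfies the error recursion~\eqref{eq:lts_error_recursion} driven by the defect $\defect{j}$ computed from inserting the $\Ltwo$-projected exact solution into the scheme.

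The key observation is that~\eqref{eq:defect-split-CN-LTI} is already in the form required by~\eqref{eq:lts-defectgen-split}: I set $\defectsmooth{j}=\defect[CN]{j}$ and identify the nontrivial second component of $\defect[LTI]{j}$ with $\defectfried[second]{j}$, since the first component of $\defect[LTI]{j}$ vanishes by the block structure of $\perturbationop{LTI}$ in~\eqref{eq:perturbation-of-CN-b}. Under the \CFL condition $\tau\le\min\{\tsCFLLFcoarse,\tsCFLfilfun\}$ and with $\solh{0}=\projLtwo\sol(\tn{0})$, so that $\discerr{0}=0$ and $\norm{\discerr{0}}[\RA]=0$, \Cref{thm:lts-abstract_error_linear_full} applies and reduces the task to bounding the three defect contributions on the right-hand side of~\eqref{eq:lts-abstract_error_linear_full_v3}.

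Next I insert the individual defect estimates. The Crank--Nicolson-type defect is controlled via~\eqref{eq:CN-defect-estimate}, giving $\dgnorm{\defect[CN]{j}}\le C(\maxmeshdiam[\dGdegree]+\tau^2)$ under the stated regularity. The filter-induced defect and its discrete time derivative are controlled by~\eqref{lts:eq:friedDefect-bound} and~\eqref{lts:eq:dtau-friedDefect-bound} in \Cref{Lem:lts-defect-bounds}, both at order $\tau^2$. Summing and using $\tau\sum_{j=1}^{n+1}\le \tn{n+1}\le T$, the integrals over consecutive subintervals combine into integrals over $[0,T]$ that are uniformly bounded by the regularity assumed on $\sol$ and $\grhs[second]$. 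The triangle inequality combining this with the projection error then yields the claim.

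The main obstacle is preserving the \emph{optimal temporal order}. A naive application of the stability estimate \Cref{lts:lem:stability} to the raw defect would expose the $\friedoph$-factor in $\friedoph\defect[LTI]{j}$ and, together with the absence of a uniform bound on $\friedoph$, ultimately cost a factor $\tau^{-1}$ and degrade the rate to $O(\tau)$. The summation-by-parts argument built into~\eqref{eq:lts-abstract_error_linear_full_v3} circumvents this loss, at the price of requiring a bound on $\dtau\defect[LTI]{j}$; verifying that the extra regularity assumed in \Cref{Lem:lts-defect-bounds} genuinely renders $\dtau\defect[LTI]{j}$ of order $\tau^2$ is the technical heart of the argument, and is already delivered by~\eqref{lts:eq:dtau-friedDefect-bound}.
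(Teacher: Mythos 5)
Your proposal is correct and follows essentially the same route as the paper: split off the $\Ltwo$-projection error, note that $\discerr{0}=0$, identify $\defectsmooth{j}=\defect[CN]{j}$ and $\defectfried[second]{j}$ with the second component of $\defect[LTI]{j}$ so that \Cref{thm:lts-abstract_error_linear_full} applies, and then insert the bounds \eqref{eq:CN-defect-estimate}, \eqref{lts:eq:friedDefect-bound}, and \eqref{lts:eq:dtau-friedDefect-bound}, summing the integrals over $[0,T]$. Your closing remark on why the summation-by-parts device is needed to avoid an order reduction matches the paper's own motivation for \Cref{thm:lts-abstract_error_linear_full}.
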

\begin{proof}
	The projection error in \eqref{eq:lts-full-error} is bounded by
	\begin{equation*}
		\dgnorm{\projerr{n+1}} \leq \Cproj\seminormHkTh{\dGdegree+1}{\meshdiam[k+1]\exsol{n+1}},
	\end{equation*}	
	\cf~\cite[Lemma 3.2]{MFO2023}.
	With Theorem~\ref{thm:lts-abstract_error_linear_full}, Lemma~\ref{Lem:lts-defect-bounds}, and \(\solh{0} = \projLtwo\exsol{0}\) we get for the discretization error
	\begin{align*}
		\dgnorm{\discerr{n+1}}
		&\leq \LTSStabConst
		\biggl( 	
			\tau \sum_{j=1}^{n+1} \dgnorm{\defectsmooth{j}}
			+ 
			\dgnorm[second]{\defectfried[second]{1}}
			+
			\tau \sum_{j=2}^{n+1} \dgnorm[second]{ \dtau \defectfried[second]{j}}
		\biggr)
		+ 
		\dgnorm[second]{\defectfried[second]{n+1}} \\
		&\leq \LTSStabConst
		\biggl(
			\frac{\tau^2}{8}\normLoneLtwo{\derv[3]{t}\sol} \\
			&\qquad\qquad+ 
			2\tau^2\cAuxfilfuncardimax\Big(
				\normLoneLtwo{\derv[3]{t}\sol[second]}
				+
				\normLoneLtwo{\derv[2]{t}\grhs[second]}
			\Big) \\
			&\qquad\qquad+
			\cAuxfilfuncardimax\Capprox\max_{j=0,\dots,n}\seminormHkTh{\dGdegree+1}{\meshdiam[\dGdegree]\tau^2\dtau\exsol[first]{j+1}} \\
			&\qquad\qquad+
			\cAuxfilfuncardimax\tau^2\Big(
				\max_{s\in[\tn{0}, T]}\dgnorm[second]{\derv[2]{t}\sol[second](s)} 
				+
				\max_{s\in[\tn{0}, T]}\dgnorm[second]{\derv[1]{t}\grhs[second](s)}
			\Big) \\
			&\qquad\qquad+
			\Capprox\tau\sum_{j=0}^{n}\seminormHkTh{\dGdegree+1}{\meshdiam[\dGdegree]\exsol{j+1/2}}
			+
			\cAuxfilfuncardimax\Capprox\tau\sum_{j=1}^{n}\seminormHkTh{\dGdegree+1}{\meshdiam[\dGdegree]\tau^2\dtau^2\exsol[first]{j+1}}
		\biggr).
	\end{align*}
	This proves the statement.
\end{proof}

\begin{remark}
	If we use the midpoint evaluation \(\grhshn{n+1/2} = \grhshn{}(\tn{n+1/2})\) in~\eqref{eq:lts-scheme} instead of the average~\eqref{eq:avg-rhs-def}, we get an additional term \(\tau^2\normLoneLtwo{\derv[2]{t}\grhs}\) in the bound of Theorem~\ref{Thm:error-estimate}. Nevertheless, this modification leads to the same order of convergence.
\end{remark}

\subsection{Stability and convergence of local time-integration schemes}\label{subsec:special-filfuns}

So far, we proved our theoretical results for general filter functions \(\filfun\). In this section, we investigate the constants from
\Cref{def:ctheta}  for the special case of using \lfc polynomials and for a rational function from our previous work \cite{CarH22,CarH24}.

We start with considering the locally-implicit method from~\cite{HocS16}, where, for all  $z \in [0,\infty)$, it holds
\begin{equation}\label{eq:CN:filfun}
    \filfun(z) = (1 + \tfrac{z}{4})^{-1} > 0, \qquad 
    \auxfilfun(z) = 1, \qquad  \auxfilfuncardi(z) = 0.
\end{equation}
Thus we obtain \(\hbeta = \infty\) with the constants \(\cAuxfilfunmin = 1\) and \(\cAuxfilfuncardi = 0\). Inserting these constants into \Cref{def:lts-CFL} we get exactly the CFL conditions from the literature, i.e., \cite[Assumption~11.26]{MFO2023} for \Friedrichs systems or from \cite{HocS16} for the special case of \Maxwells equations. This indicates, that our new general theory does not require stronger assumptions than for the known results.

Now we turn to the new \lts methods. Here, motivated by \cite{CarH22}, we choose the filter function as the following polynomial of degree $p$
\begin{equation}\label{eq:lfc-polynomials}
	\filfun(z)z 
	= 
	\Pp(z)z 
	= 
	2 - \frac{2}{\Tp(\nup)}\Tp\Bigl(\nup - \frac{z}{\alphap}\Bigr),
	\qquad
	\alphap 
	= 
	2\frac{\Tp^{\prime}(\nup)}{\Tp(\nup)},
\end{equation}
with a stabilization parameter \(\nup > 1\). With \(\Tp\), we denote the $p$th Chebychev polynomial of first kind. 
\begin{theorem} \label{thm:results-lts}
	\begin{subequations}
	Let $\filfun$ be given by \eqref{eq:lfc-polynomials} for some $p\in\bbN$ and \(\nup > 1\). If we choose 
	\begin{equation}\label{eq:lfc-filfunBound-constants}
		\cAuxfilfunmin = \tfrac12\bigl(1 - \tfrac{1}{\Tp(\nup)}\bigr) \in (0, \tfrac12),
	\end{equation}
	then, \eqref{eq:ctheta} and \eqref{eq:auxfilfuncardi-bound} are satisfied for 
	\begin{equation}\label{eq:lfc-beta}
		\hbeta^2 = \alphap(\nup + 1) 
		\qquad \text{and} \qquad
		\cAuxfilfuncardi = \tfrac14\left(\tfrac{1}{\cAuxfilfunmin} - 1\right). 
	\end{equation}
	\end{subequations}
\end{theorem}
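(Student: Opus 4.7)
I plan to verify the two properties \eqref{eq:ctheta} and \eqref{eq:auxfilfuncardi-bound} in turn, relying on three classical facts about Chebychev polynomials of the first kind: $\Tp$ and $\Tp'$ are nonnegative and strictly increasing on $[1,\infty)$; on $[-1,1]$ one has $|\Tp(y)|\leq 1$ and $\Tp'(y)\leq p^2 = \Tp'(1)$; and the Pythagorean-type identity $(y^2-1)\Tp'(y)^2 = p^2(\Tp(y)^2 - 1)$ for $y>1$.

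For \eqref{eq:ctheta}, I first change variables to $y=\nup-z/\alphap$, which bijectively maps $[0,\hbeta^2]$ onto $[-1,\nup]$, so that by \eqref{eq:lfc-polynomials} one has $\Pp(z)z = 2 - \tfrac{2}{\Tp(\nup)}\Tp(y)$. Since $\Tp(y)\in[-1,\Tp(\nup)]$ on $[-1,\nup]$ and the chosen $\cAuxfilfunmin$ satisfies $4(1-\cAuxfilfunmin) = 2 + 2/\Tp(\nup)$, this yields $\Pp(z)z\in[0, 4(1-\cAuxfilfunmin)]$; together with $\Pp(0)=1$, positivity of $\Pp$ follows. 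The bound $\Pp(z)\leq 1$ is equivalent to $\int_y^{\nup}\Tp'(s)\,ds\leq(\nup-y)\Tp'(\nup)$, which holds since $\Tp'(s)\leq\Tp'(\nup)$ throughout $[-1,\nup]$. Multiplying the identity $\auxfilfun(z)\Pp(z)=1-\Pp(z)z/4$ from \eqref{eq:auxfilfun} and combining both bounds on $\Pp(z)z$ and $\Pp(z)$ then gives $\auxfilfun(z)\geq\cAuxfilfunmin$.

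For \eqref{eq:auxfilfuncardi-bound}, I use $\auxfilfuncardi(z)+\tfrac14 = (1-\Pp(z))/(z\Pp(z))$. Since $\Pp\leq 1$ this is nonnegative, yielding the lower bound $\auxfilfuncardi(z)\geq -\tfrac14\geq -\cAuxfilfuncardi$ (the last inequality follows from $\cAuxfilfunmin\leq\tfrac12$, which implies $\cAuxfilfuncardi\geq\tfrac14$). The upper bound is equivalent to $\Pp(z)(z+4\cAuxfilfunmin)\geq 4\cAuxfilfunmin$; substituting the closed form of $\Pp(z)z$ and simplifying reduces it, after the same change of variables, to the Chebychev inequality
\[
(\Tp(\nup) - 1)(\Tp(\nup) - \Tp(y)) \geq \Tp'(\nup)(\nup - y)(\Tp(y) - 1), \qquad y\in[1,\nup],
\]
the case $y\in[-1,1]$ being trivial since then $\Tp(y)-1\leq 0$.

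The proof of this inequality is the main obstacle. My plan is to deduce it from the strict monotonicity of $\phi(y):=\Tp'(y)/(\Tp(y)-1)$ on $(1,\nup]$. Using the Chebychev ODE $(1-y^2)\Tp''-y\Tp'+p^2\Tp=0$ to eliminate $\Tp''$ together with $(y^2-1)\Tp'(y)^2=p^2(\Tp(y)^2-1)$, a direct computation gives
\[
\Tp'(y)^2 - \Tp''(y)(\Tp(y)-1) = \frac{(\Tp(y)-1)(p^2 + y\Tp'(y))}{y^2-1} > 0 \quad\text{for } y>1,
\]
so $\phi'(y)<0$. Consequently $\Tp'(s)(\Tp(\nup)-1)\geq \Tp'(\nup)(\Tp(s)-1)$ for $s\in[y,\nup]$, and integrating this pointwise inequality over $s\in[y,\nup]$ while bounding $\Tp(s)-1\geq \Tp(y)-1$ on the right (by monotonicity of $\Tp$) yields the desired Chebychev inequality and completes the proof.
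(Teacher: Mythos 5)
Your proposal is correct, and it takes a genuinely more self-contained route than the paper. The paper's proof shares your skeleton --- establish $0<\Pp(z)\le 1$ and $\Pp(z)z\le 4(1-\cAuxfilfunmin)$ on $[0,\hbeta^2]$, then conclude $\auxfilfun(z)=\Pp(z)^{-1}\bigl(1-\tfrac{z}{4}\Pp(z)\bigr)\ge 1-\tfrac{z}{4}\Pp(z)\ge\cAuxfilfunmin$ exactly as you do --- but it imports both polynomial bounds from \cite[Lemmas 5.1, 5.4]{CarH22} and disposes of the bound on $\auxfilfuncardi$ with the single remark that it can be shown analogously to \cite[Lemma 5.4]{CarH22}. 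You instead prove everything from classical Chebychev facts: the substitution $y=\nup-z/\alphap$ mapping $[0,\hbeta^2]$ onto $[-1,\nup]$ (which indeed gives $\Pp(z)z\in[0,2+2/\Tp(\nup)]=[0,4(1-\cAuxfilfunmin)]$), Markov's bound $\max_{[-1,1]}\abs{\Tp'}=p^2=\Tp'(1)\le\Tp'(\nup)$ for $\Pp\le 1$, and, for $\abs{\auxfilfuncardi}\le\cAuxfilfuncardi$, the identity $\auxfilfuncardi(z)+\tfrac14=(1-\Pp(z))/(z\Pp(z))$ together with a clean reduction of the upper bound to $(\Tp(\nup)-1)(\Tp(\nup)-\Tp(y))\ge\Tp'(\nup)(\nup-y)(\Tp(y)-1)$ on $[1,\nup]$. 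Your mechanism for that inequality --- monotone decay of $\Tp'/(\Tp-1)$ on $(1,\nup]$ --- checks out: using the Chebychev ODE and $(y^2-1)\Tp'(y)^2=p^2(\Tp(y)^2-1)$ one does get $\Tp'(y)^2-\Tp''(y)(\Tp(y)-1)=(\Tp(y)-1)(p^2+y\Tp'(y))/(y^2-1)>0$ for $y>1$, and the subsequent integration with $\Tp(s)-1\ge\Tp(y)-1$ delivers the inequality; the case $y\in[-1,1]$ is trivial as you say, and the lower bound $\auxfilfuncardi\ge-\tfrac14\ge-\cAuxfilfuncardi$ follows from $\cAuxfilfunmin<\tfrac12$. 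What the paper's route buys is brevity; what yours buys is a proof readable without \cite{CarH22} and an explicit reason behind the value of $\cAuxfilfuncardi$. One small gloss to repair: strict positivity of $\Pp$ on $(0,\hbeta^2]$ does not follow from $\Pp(z)z\ge 0$ and $\Pp(0)=1$ alone; you need $\Tp(y)<\Tp(\nup)$ for all $y\in[-1,\nup)$, which holds because $\Tp$ is strictly increasing on $[1,\nup]$ and $\Tp(\nup)>1\ge\Tp(y)$ on $[-1,1]$ --- a one-line fix.
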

\begin{proof}
	Following~\cite[Lemmas 5.1, 5.4]{CarH22} we have
	\begin{equation*} 
		\filfun(z)z \leq 4(1 - \cAuxfilfunmin), \qquad 0 < \filfun(z) \leq 1,
		\qquad 
		0 \leq z \leq \hbeta^2
	\end{equation*}
	and hence
	\begin{equation*}
		\auxfilfun(z) 
		= 
		\filfun(z)^{-1} \bigl(1 - \tfrac{z}{4} \filfun(z) \bigr) 
		\geq 
		1 - \tfrac{z}{4} \filfun(z) 
		\geq 
		\cAuxfilfunmin \ .
	\end{equation*}
	The formula for $\cAuxfilfuncardi$ can be shown analogously to the proof of~\cite[Lemma 5.4]{CarH22}.
\end{proof}
Note that for $p=1$ we have $\filfun \equiv 1$ and the scheme~\eqref{eq:lts-scheme} is just the \leapfrog method (which is independent of the stabilization parameter \(\nup\)). If we choose \(\cAuxfilfunmin = 1 - \cflThetaLeapfrogAll^2\), for \(\cflThetaLeapfrogAll^2\in(0, 1)\), we get \(\hbeta[\filfun]^2 = 4(1 - \cAuxfilfunmin) = 4\cflThetaLeapfrogAll^2\).

We collect all relevant details on the different LTI schemes and their constants in \Cref{tab:methods-filfuns-vals}.

If, for \(p\geq 2\), we choose 
\begin{equation}\label{eq:special-stab-param}
	\nup = 1 + \frac{\lfcstabparam^2}{2\lfcdegree^2}, \qquad \lfcstabparam > 0,
\end{equation}
then $\cAuxfilfunmin$ defined in \eqref{eq:lfc-filfunBound-constants} can be bounded independently of \(\lfcdegree\), see~\cite[Lemma 5.5]{CarH22} for a more in-depth view. In this case it holds 
\begin{equation*}
	\hbeta[\filfun]^2 = \hbeta[p]^2 \geq 2\lfcdegree \geq 4.
\end{equation*}
Thus, the assumption of Lemma~\ref{Lemma:Rpm-bound-lf-all} on $\filfun$ is satisfied for the \lfc polynomials \eqref{eq:lfc-polynomials}.
\begin{remark}[Implementation]
	\begin{enumerate}
          \item It is important to note that one does not have to
          evaluate the polynomials~\eqref{eq:lfc-polynomials} of
          degree \(\lfcdegree - 1\) in each step.  Instead one
          calculates the action of a vector \(\solvector[first]{}\) on
          \(\filfun(-\tau^2\scdordermatrixmodified)\) in an efficient
          way by a three-term recurrence relation,
          see~\cite[Algo. 4.2]{Car21} for details.
		Here \(\scdordermatrixmodified\) denotes the system matrix of \(\friedopsecondorder[m]\) one obtains after choosing an appropriate basis of the \dG space \(\dgspace\), see~\cite{HocS19}.
		\item By sorting the degrees-of-freedom in an appropriate way~\cite{HocS19, Stu17} one can see that \(\filfun(-\tau^2\scdordermatrixmodified)\) only acts on the few fine elements in \(\meshFine\) plus two additional layers.
	\end{enumerate}
\end{remark}

\section{Numerical examples}\label{sec:numerical-examples}
At last, we verify our findings numerically with three examples. First we  substantiate the error bounds of \Cref{Thm:error-estimate} and we study the influence of stabilization of the \lfc \lts (LFC-LTS) method. Afterwards, we investigate the efficiency of the LFC-LTS method compared to the locally implicit (LI) and the original \leapfrog scheme. The linear systems in the LI method are solved with the conjugate gradient method without preconditioning since it required only a few iterations (not more than four in our examples). Note that it is essential to run the conjugate gradient method with the correct inner-product induced by the mass matrix.

The codes to reproduce our results are available at
\begin{center}
    \url{https://gitlab.kit.edu/malik.scheifinger/dg-lts-maxwell}
\end{center}
The software is based on the FEM library \href{https://www.dealii.org}{\texttt{deal.II}}~\cite{dealII23} at version 9.5 and the Maxwell toolbox \href{https://gitlab.kit.edu/kit/ianm/ag-numerik/projects/dg-maxwell/timaxdg}{\texttt{TiMaxdG}}~\cite{TiMaxdG}. Since we use \texttt{deal.II}, all our examples are done with rectangular mesh elements.

\subsection{Linear Maxwells equations}

Linear \Maxwells equations in transverse-electric (TE) mode, see~\cite[\S 2.3]{Nie09}, are given by
\begin{subequations}\label{eq:TE-system}
    \begin{align}
        \permitt\derv{t} \exE_x &= \partial_y \exH_z - \exJ_x, && \domain\times(0,T), \\
        \permitt\derv{t} \exE_y &= -\partial_x \exH_z - \exJ_y, && \domain\times(0,T), \\
        \permeab\derv{t} \exH_z &= \partial_y \exE_x - \partial_x \exE_y, && \domain\times(0,T), \\
        \exE(0) &= \exE^0, \quad \exH(0) = \exH^0, && \domain,
        \\
        \exE \times \normal{} &= 0, && \partial\domain\times(0,T) .
    \end{align}
\end{subequations}
As computational spatial domain we choose \(\domain = (0, 1)^2\) and final time \(T = 1\). We set \(\permitt = \permeab = 1\) and use initial values 
\begin{subequations}\label{eq:TE-data-IV}
    \begin{align}
        \exE_x^0(x, y, t) &= \cos(2\pi x)\sin(2\pi y), \\
        \exE_y^0(x, y, t) &= -\sin(2\pi x)\cos(2\pi y), \\
        \exH_z^0(x, y, t) &= 4\pi\cos(2\pi x)\cos(2\pi y),
    \end{align}
\end{subequations}
as well as right-hand sides
\begin{subequations}\label{eq:TE-data-RHS}
    \begin{align}
        \exJ_x(x, y, t) &= -(1 + 8\pi^2)\cos(2\pi x)\sin(2\pi y)\mathrm{e}^t, \\
        \exJ_y(x, y, t) &= (1 + 8\pi^2)\sin(2\pi x)\cos(2\pi y)\mathrm{e}^t .
    \end{align}
\end{subequations}
The exact solution to~\eqref{eq:TE-system} with~\eqref{eq:TE-data-IV} and~\eqref{eq:TE-data-RHS} is given by a variant of the cavity solution~\cite{HocS16}
\begin{subequations}\label{eq:TE-exact-sol}
    \begin{align}
        \exE_x(x, y, t) &= \cos(2\pi x)\sin(2\pi y)\mathrm{e}^t, \\
        \exE_y(x, y, t) &= -\sin(2\pi x)\cos(2\pi y)\mathrm{e}^t, \\
        \exH_z(x, y, t) &= 4\pi\cos(2\pi x)\cos(2\pi y)\mathrm{e}^t .
    \end{align}
\end{subequations}
Thereby we can compute the exact \(L^2\)-error of our scheme.

\begin{figure}[!htb]
    \centering
    \includegraphics[width=6cm, height=6cm]{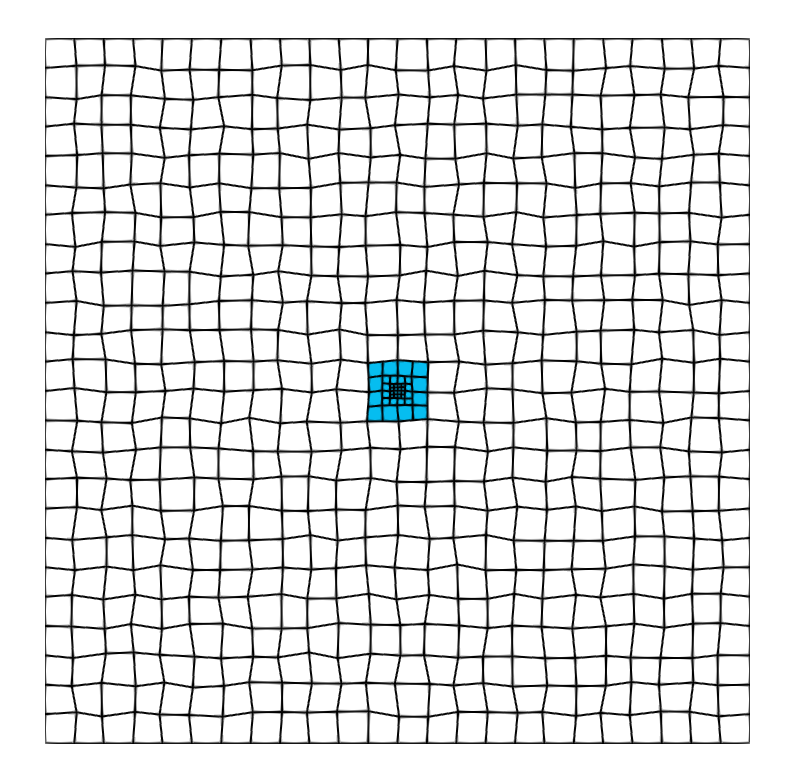}
    \raisebox{0.5cm}{{\includegraphics[width=5cm, height=5cm]{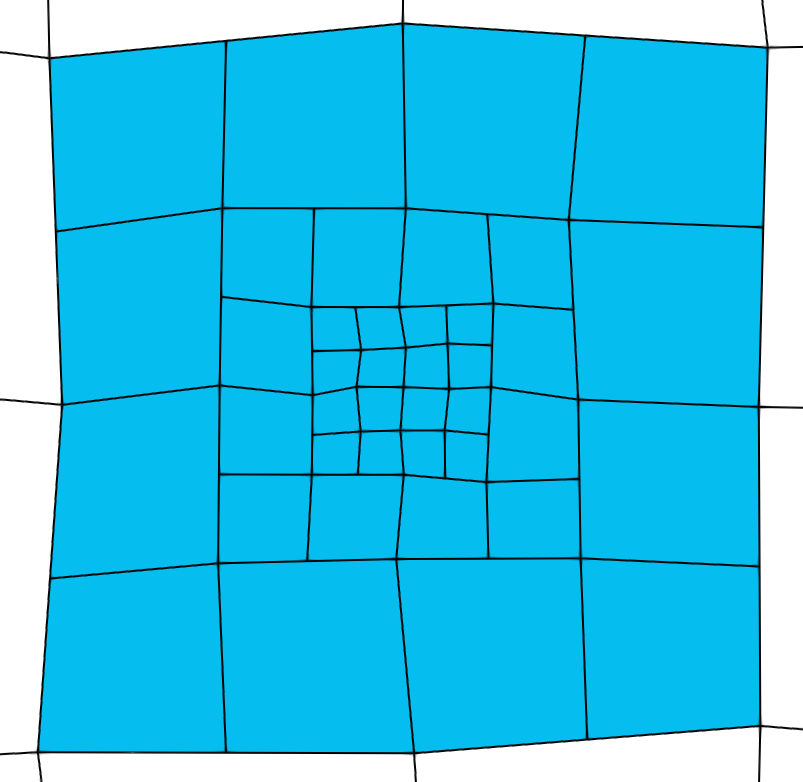}}}
    \caption{Locally refined mesh of the domain \((0,1)^2\).}
    \label{fig:2D-mesh}
\end{figure}

We use the spatial mesh illustrated in \Cref{fig:2D-mesh} in which we apply threefold refinement in the blue central box. The mesh has been randomly perturbed to counteract super convergence effects of the space discretization. Moreover, the \dG degree is choosen as \(\dGdegree = 5\). 

\begin{figure}[!htb]
    \centering
    \begin{tikzpicture}

\definecolor{crimson2143940}{RGB}{214,39,40}
\definecolor{darkgray176}{RGB}{176,176,176}
\definecolor{darkorange25512714}{RGB}{255,127,14}
\definecolor{forestgreen4416044}{RGB}{44,160,44}
\definecolor{gray}{RGB}{128,128,128}
\definecolor{lightgray204}{RGB}{204,204,204}
\definecolor{mediumpurple148103189}{RGB}{148,103,189}
\definecolor{steelblue31119180}{RGB}{31,119,180}

\begin{axis}[
legend cell align={left},
legend style={
  at={(1.12,-0.2)},
  draw=lightgray204
},
legend columns=-1,
log basis x={10},
log basis y={10},
tick align=inside,
tick pos=left,
x grid style={darkgray176},
xlabel={time stepsize $\tau$},
xmin=8.31566529016915e-05, xmax=0.0028,
xmode=log,
xtick style={color=black},
xtick={0.0001,0.001},
xticklabels={
  \(\displaystyle {10^{-4}}\),
  \(\displaystyle {10^{-3}}\)
},
y grid style={darkgray176},
ylabel={max \(\displaystyle L^2\) error},
ymin=3e-07, ymax=0.00003,
ymode=log,
ytick style={color=black}
]
\path [draw=black, semithick, dash dot]
(axis cs:0.0020184163717259,1e-07)
--(axis cs:0.0020184163717259,0.0001);

\addplot [thick, steelblue31119180, mark=*, mark size=1, mark options={solid}]
table {%
0.00249269 1e+20
0.00226772 1e+20
0.002094 1e+20
0.00206305 1e+20
0.002018 1e+20
0.00187686 1e+20
0.001862 1e+20
0.00170747 1e+20
0.00155337 1e+20
0.00141318 1e+20
0.00128564 1e+20
0.00116961 1e+20
0.00106405 1e+20
0.00096802 1e+20
0.000933 1e+20
0.00088065 1e+20
0.00080117 1e+20
0.00072887 1e+20
0.00066309 1e+20
0.00060324 1e+20
0.0005488 1e+20
0.00049927 1e+20
0.000471 1e+20
0.00045421 1e+20
0.00041322 1e+20
0.00037592 1e+20
0.000342 1e+20
0.00031113 1e+20
0.00028305 1e+20
0.000266 4.7e-07
0.0002575 4.7e-07
0.00023426 4.6e-07
0.00021312 4.5e-07
0.00019389 4.4e-07
0.00017639 4.4e-07
0.00016047 4.4e-07
0.00014599 4.3e-07
0.00013281 4.3e-07
0.00012083 4.3e-07
0.00010992 4.3e-07
0.0001 4.3e-07
};
\addlegendentry{\leapfrog}

\addplot [thick, darkorange25512714, mark=*, mark size=1, mark options={solid}]
table {%
0.00249269 1e+20
0.00226772 1e+20
0.002094 1e+20
0.00206305 1e+20
0.002018 1e+20
0.00187686 1e+20
0.001862 1e+20
0.00170747 1e+20
0.00155337 1e+20
0.00141318 1e+20
0.00128564 1e+20
0.00116961 1e+20
0.00106405 1e+20
0.00096802 1e+20
0.000933 1e+20
0.00088065 1e+20
0.00080117 1e+20
0.00072887 1e+20
0.00066309 1e+20
0.00060324 1e+20
0.0005488 1e+20
0.00049927 1e+20
0.000471 7.4e-07
0.00045421 7.1e-07
0.00041322 6.3e-07
0.00037592 5.8e-07
0.000342 5.3e-07
0.00031113 5e-07
0.00028305 4.8e-07
0.000266 4.7e-07
0.0002575 4.7e-07
0.00023426 4.6e-07
0.00021312 4.5e-07
0.00019389 4.4e-07
0.00017639 4.4e-07
0.00016047 4.4e-07
0.00014599 4.3e-07
0.00013281 4.3e-07
0.00012083 4.3e-07
0.00010992 4.3e-07
0.0001 4.3e-07
};
\addlegendentry{$p = 2$}

\addplot [thick, forestgreen4416044, mark=+, mark size=2, mark options={solid}]
table {%
0.00249269 1e+20
0.00226772 1e+20
0.002094 1e+20
0.00206305 1e+20
0.002018 1e+20
0.00187686 1e+20
0.001862 1e+20
0.00170747 1e+20
0.00155337 1e+20
0.00141318 1e+20
0.00128564 1e+20
0.00116961 1e+20
0.00106405 1e+20
0.00096802 1e+20
0.000933 2.38e-06
0.00088065 2.13e-06
0.00080117 1.78e-06
0.00072887 1.49e-06
0.00066309 1.26e-06
0.00060324 1.07e-06
0.0005488 9.2e-07
0.00049927 8e-07
0.000471 7.4e-07
0.00045421 7e-07
0.00041322 6.3e-07
0.00037592 5.7e-07
0.000342 5.3e-07
0.00031113 5e-07
0.00028305 4.8e-07
0.000266 4.7e-07
0.0002575 4.7e-07
0.00023426 4.6e-07
0.00021312 4.5e-07
0.00019389 4.4e-07
0.00017639 4.4e-07
0.00016047 4.4e-07
0.00014599 4.3e-07
0.00013281 4.3e-07
0.00012083 4.3e-07
0.00010992 4.3e-07
0.0001 4.3e-07
};
\addlegendentry{$p = 4$}

\addplot [thick, crimson2143940, mark=x, mark size=2, mark options={solid}]
table {%
0.00249269 1e+20
0.00226772 1e+20
0.002094 1e+20
0.00206305 1e+20
0.002018 1e+20
0.00187686 1e+20
0.001862 9.32e-06
0.00170747 7.83e-06
0.00155337 6.48e-06
0.00141318 5.37e-06
0.00128564 4.45e-06
0.00116961 3.69e-06
0.00106405 3.07e-06
0.00096802 2.55e-06
0.000933 2.37e-06
0.00088065 2.13e-06
0.00080117 1.78e-06
0.00072887 1.49e-06
0.00066309 1.26e-06
0.00060324 1.07e-06
0.0005488 9.2e-07
0.00049927 8e-07
0.000471 7.3e-07
0.00045421 7e-07
0.00041322 6.3e-07
0.00037592 5.7e-07
0.000342 5.3e-07
0.00031113 5e-07
0.00028305 4.8e-07
0.000266 4.7e-07
0.0002575 4.7e-07
0.00023426 4.5e-07
0.00021312 4.5e-07
0.00019389 4.4e-07
0.00017639 4.4e-07
0.00016047 4.4e-07
0.00014599 4.3e-07
0.00013281 4.3e-07
0.00012083 4.3e-07
0.00010992 4.3e-07
0.0001 4.3e-07
};
\addlegendentry{$p = 8$}

\addplot [thick, mediumpurple148103189, mark=star, mark size=2, mark options={solid}]
table {%
0.00249269 1e+20
0.00226772 1e+20
0.002094 1e+20
0.00206305 1e+20
0.002018 1.092e-05
0.00187686 9.44e-06
0.001862 9.32e-06
0.00170747 7.83e-06
0.00155337 6.48e-06
0.00141318 5.37e-06
0.00128564 4.45e-06
0.00116961 3.69e-06
0.00106405 3.07e-06
0.00096802 2.55e-06
0.000933 2.37e-06
0.00088065 2.12e-06
0.00080117 1.78e-06
0.00072887 1.49e-06
0.00066309 1.26e-06
0.00060324 1.07e-06
0.0005488 9.2e-07
0.00049927 8e-07
0.000471 7.3e-07
0.00045421 7e-07
0.00041322 6.3e-07
0.00037592 5.7e-07
0.000342 5.3e-07
0.00031113 5e-07
0.00028305 4.8e-07
0.000266 4.7e-07
0.0002575 4.7e-07
0.00023426 4.5e-07
0.00021312 4.5e-07
0.00019389 4.4e-07
0.00017639 4.4e-07
0.00016047 4.4e-07
0.00014599 4.3e-07
0.00013281 4.3e-07
0.00012083 4.3e-07
0.00010992 4.3e-07
0.0001 4.3e-07
};
\addlegendentry{$p = 9$}

\draw (0.0014, 4.8e-6) -- (0.0007, 1.2e-6) -- (0.0014, 1.2e-6) -- cycle;
\draw (0.0011, 2.6e-6) coordinate[label=below:$2$];

\end{axis}

\end{tikzpicture}
    \caption{Error of the numerical solution of~\eqref{eq:TE-system} with initial data given by~\eqref{eq:TE-exact-sol} obtained by the \leapfrog method (blue) and the \lts method~\eqref{eq:lts-scheme} with filter~\eqref{eq:lfc-polynomials}, polynomial degrees \(\lfcdegree = 2\) (orange), \(p= 4\) (green), \(p = 8\) (red), \(p = 9\) (purple), and stabilization~\eqref{eq:special-stab-param} with \(\lfcstabparam = 1\). The space discretization is done with \dG degree \(\dGdegree = 5\) and a three times at the center locally refined mesh, see~\Cref{fig:2D-mesh}. The dash-dotted line depicts the maximal stable time stepsize of \leapfrog method used on the coarse mesh \(\meshLeapfrog\).}
    \label{fig:lfc-convergence}
\end{figure}

\subsection{Order of convergence}~\label{subsec:convergence-example}
We apply the LTS method~\eqref{eq:lts-scheme} with the LFC filter~\eqref{eq:lfc-polynomials} and the stabilization parameter~\eqref{eq:special-stab-param} for \(\lfcstabparam = 1\) and various values of \(\lfcdegree\). 

In \Cref{fig:lfc-convergence}, we illustrate the stability and convergence behavior with LFC polynomials of degrees \(\lfcdegree = 2, 4, 8, 9\). 
We observe second-order convergence in time until the error reaches a plateau stemming from the spatial discretization. 
Moreover, we see that an increase of \(\lfcdegree\) weakens the \CFL condition  significantly compared to the \leapfrog method (blue). Obviously one can not exceed the maximal stable time stepsize of the \leapfrog method on the coarse part \(\meshLeapfrog\). 

\subsection{Necessity  of stabilization}\label{subsec:stabilization-example}
In this example we show that stabilization, i.e., choosing \(\lfcstabparam>0\), is indispensable in the LFC-LTS method, see also \cite{CarH22} for second-order differential equations. For this we consider the following one-dimensional example
\begin{subequations}\label{eq:1D-equation}
    \begin{align}
        \derv{t} \sol[first] &= -\partial_x \sol[second], && \domain\times(0,T), \\
        \derv{t} \sol[second] &= -\partial_x \sol[first], && \domain\times(0,T), \\
        \sol[first](0) &= \sol[first]^0, \quad \sol[second](0) = \sol[second]^0, && \domain, \\
        \sol[first] &= 0, && \partial\domain\times(0,T),
    \end{align}
\end{subequations}
on \(\domain = (0, 1)\). We choose the initial values such that
\begin{align*}
    \sol[first](x, t) &= \sin(2\pi x)\cos(2\pi t) \\
    \sol[second](x, t) &= -\cos(2\pi x)\sin(2\pi t) .
\end{align*}
is the exact solution of \eqref{eq:1D-equation}.

We use a spatial grid where all cells have diameter \(\maxmeshdiam = 0.009975\) except for one cell in the middle of \(\domain\) with diameter \(\minmeshdiam = 0.0025 \approx \maxmeshdiam /4\).
In \Cref{fig:influence-of-stabilization} we show the error of the LFC-LTS method with polynomial degrees \(p=3, 4, 5\) and stabilization~\eqref{eq:special-stab-param} with \(\lfcstabparam=0\) and \(\lfcstabparam=0.1\), on the left and the right picture, respectively.
As we can see, for \(\lfcstabparam=0\), the method becomes unstable for certain time stepsizes whereas the slightly stabilized method with \(\lfcstabparam=0.1\) has no deviations. 
\begin{figure}
    \centering
    \input{tikz/1D-stabilization/LTS-stabilization.tex}
    \caption{Error of the LFC-LTS method for the example in Section~\ref{subsec:stabilization-example} with polynomial degrees \(\lfcdegree = 3, 4, 5\). Left: without stabilization ($\lfcstabparam = 0$), right: with stabilization ($\lfcstabparam = 0.1$).
    The dash-dotted line depicts the maximal stable time-stepsize of the \leapfrog method on the coarse mesh \(\meshLeapfrog\). }
    \label{fig:influence-of-stabilization}
\end{figure}

\subsection{Runtime comparison}  \label{sec:runtime}

Next, we compare the runtimes of the LFC-LTS method, the LI method, and the \leapfrog method at two different examples. 
We revisit~\eqref{eq:TE-system}  on  \(\domain = (0, 4)^2\) and use an equidistant spatial grid with \(\meshdiam \approx 0.022\). For the dG space we use the polynomial degree \(\dGdegree = 2\). 
Cells with center in the ball \(\norm{x} \leq r\) with \(r\in\{0.1, 0.5\}\) are refined twice.
The minimal mesh diameter on the whole mesh is \(\minmeshdiam \approx 0.0055\). 
Moreover we select the time-stepsize \(\tau = 0.0022\) which is small enough to balance time and space discretization errors. For \(r = 0.1\) and \(r= 0.5\) the $\Ltwo$ errors are \(4\cdot 10^{-4}\) and \(7.6\cdot 10^{-4}\), respectively for all three methods.

In \Cref{tab:runtime}, we summarize the runtimes of the \leapfrog, LI, and LFC-LTS method for the two values of $r$. For the \leapfrog method we choose the maximum stable stepsize \(\tau = 6.8\cdot 10^{-4}\). For the LFC-LTS scheme, we choose the LFC polynomial degree \(\lfcdegree = 4\) and  \(\tau = 2.2\cdot 10^{-3}\). 

For $r=0.1$, the two \lti schemes LI and LFC-LTS clearly outperform the \leapfrog method, with the LFC-LTS method being more than twice as fast. In the second example, where $r=0.5$, the percentage of dofs in the refined region is still relatively large, so that the computational cost for solving the linear system in the LI method is significant. Here, the \leapfrog method is slighly faster than the LI method but much slower than the LFC-LTS method. 

\begin{table}
	\begin{center}
		\begin{tabular}{l|r@{}rr|r@{}rr}
			& \multicolumn{3}{c|}{$r=0.1$} & \multicolumn{3}{c}{$r=0.5$}\\
			\hline
			total dofs   & $1\,782\,756$ & & & $2\,097\,522$& & \\
			refined dofs & $15\,012$ &  &\( 0.84\% \)   &    $353\,430$&  &\( 16.85\%\)  \\
			\hline
			runtime \leapfrog     &     $71.1$ & sec  & $100.0\%$ &       $85.1$ & sec & $100.0\%$\\  
			runtime LI            &     $53.7$ & sec  & $75.5\%$ &       $93.0$  & sec & $109.3\%$\\ 
			runtime LFC-LTS       &     $34.1$ & sec  & $48.0\%$ &      $57.5$  &sec & $67.6\%$ \\ 			 			  
		\end{tabular}
	\end{center}
	\caption{Runtime comparison between the \leapfrog, LI and LFC-LTS methods on two different examples explained in Section~\ref{sec:runtime}.}
	\label{tab:runtime}
\end{table}

Overall we see that the LFC-LTS method performs better than the \leapfrog and the LI method in both scenarios.

\section*{Acknowledgments}
We thank Constantin Carle and Benjamin Dörich for inspiring discussions on local time-integration and its error analysis and helpful comments on the manuscript.

\bibliographystyle{abbrv}
\bibliography{references}
\end{document}